\documentclass[11pt]{amsart}

\usepackage[T1]{fontenc}
\usepackage{lmodern}
\usepackage{microtype}
\usepackage{comment}
\usepackage{amsmath,amssymb,mathtools}
\usepackage{enumitem}
\usepackage{xcolor}
\usepackage[colorlinks=true,linkcolor=blue!55!black,citecolor=blue!55!black,
            urlcolor=blue!55!black]{hyperref}
\usepackage[nameinlink,noabbrev]{cleveref}

\numberwithin{equation}{section}

\newtheorem{theorem}{Theorem}[section]
\newtheorem{proposition}[theorem]{Proposition}
\newtheorem{lemma}[theorem]{Lemma}
\newtheorem{corollary}[theorem]{Corollary}
\newtheorem{remark}[theorem]{Remark}
\newtheorem{example}[theorem]{Example}
\theoremstyle{definition}

\DeclareMathOperator{\AC}{AC}
\DeclareMathOperator{\Dom}{Dom}
\newcommand{\R}{\mathbb R}
\newcommand{\E}{\mathbb E}
\newcommand{\N}{\mathbb N}

\newcommand{\cE}{\mathcal E}

\newcommand{\cV}{\mathcal V}
\newcommand{\one}{\mathbf 1}
\newcommand{\loc}{\mathrm{loc}}

\newcommand{\dd}{\mathop{}\!\mathrm{d}}

\title[One-dimensional Feynman--Kac regularity]
{One-dimensional Feynman--Kac regularity under the
Engelbert--Schmidt conditions}

\author{Johannes Ruf}
\address{Department of Mathematics, London School of Economics and Political Science, London, United Kingdom}
\email{j.ruf@lse.ac.uk}
\date{}

\subjclass[2020]{Primary 60J60, 35B65; Secondary 35K10, 60J46, 60J35}
\keywords{Engelbert--Schmidt conditions, Feynman--Kac semigroup,
one-dimensional diffusion, parabolic regularity, scale function,
speed measure}

\begin{document}

\begin{abstract}
Let $X$ be a time-homogeneous one-dimensional diffusion whose
coefficients satisfy the Engelbert--Schmidt conditions, and let $q$ be
a bounded potential.  We show that the associated Feynman--Kac
semigroup is smooth in time and $C^1$ in space, with a locally absolutely
continuous first spatial derivative; the Kolmogorov equation holds
almost everywhere.  If the drift, second-order coefficient, and potential
are continuous, the Feynman--Kac value is $C^{1,2}$ in the interior.
Thus, in the time-homogeneous one-dimensional setting, continuity
suffices for interior $C^{1,2}$ regularity.  We identify the
corresponding semigroups with the killed and reflected Feynman--Kac
functionals, treat nonzero, time-independent  Dirichlet data, and give examples
showing the sharpness of the continuity assumptions and the failure of
the corresponding statement in two dimensions.
\end{abstract}

\maketitle

\section{Introduction}

Consider, on an open interval $I=(\ell,r)$, the time-homogeneous stochastic differential equation \begin{equation}\label{eq:sde-intro} \dd X_t=b(X_t)\,\dd t+\sigma(X_t)\,\dd W_t, \qquad X_0=x\in I, \end{equation} up to its exit time $\zeta$ from $I$. We study the spatial regularity of the associated Feynman--Kac value under the low regularity allowed by the Engelbert--Schmidt conditions.

Classical Feynman--Kac and parabolic regularity results are often
formulated under H\"older or stronger spatial regularity assumptions;
see, for example, the classical treatment in \cite[Chapter~6]{Friedman1975}.  Similar assumptions
appear in the mathematical-finance literature.  Janson and Tysk
\cite{JansonTysk2006} assume local Lipschitz continuity of the diffusion
coefficients in their multidimensional setting and local
$1/2$-H\"older continuity in one dimension.  Ekstr\"om and Tysk
\cite[Hypothesis~2.1 and Theorem~3.2]{EkstromTysk2009} and Bayraktar
and Xing \cite[Theorem~2]{BayraktarXing2010} likewise work under local
$1/2$-H\"older continuity in space.  Karatzas and Ruf
\cite[Propositions~5.2 and 5.4]{KaratzasRuf2016} characterize the tail
distribution of the explosion time of a one-dimensional diffusion as
the minimal nonnegative classical solution of the associated Cauchy
problem under local H\"older assumptions.
For the natural-scale half-line problem, Chen, Huang, Song, and Zhu
\cite[Theorems~4.4 and~5.2 and  Remark~5.4]{ChenHuangSongZhu2017}
obtain classicality under local $\delta$-H\"older continuity of $\sigma$
for any $\delta>0$ and, under mere continuity, represent the
stochastic solution as a limit of classical approximations.

Ekstr\"om, Janson, and Tysk
\cite[Theorem~8.1 and Corollary~8.3]{EkstromJansonTysk2015}
consider natural-scale generalized diffusions on $\R$.  For continuous
initial data satisfying a global growth condition, they prove
smoothness in time and show that absolute continuity of the speed
measure yields $C^1$ spatial regularity, while continuity of its density
yields $C^2$ spatial regularity.  Consequently, for a natural-scale diffusion with zero potential, their results already give
positive-time $C^{1,2}$ regularity when $\sigma$ is
continuous and nonzero.  Compared with this natural-scale, zero-potential result, we allow
general Engelbert--Schmidt coefficients before passage to scale,
bounded Borel potentials, arbitrary intervals, and the canonical
killed and reflected realizations.  Within the $L^2$ framework, the
initial datum is arbitrary and need not be continuous or satisfy a
pointwise growth condition.

We work under the Engelbert--Schmidt conditions
\[
  \sigma\ne0,
  \qquad
  \frac1{\sigma^2},\ \frac b{\sigma^2}\in L^1_{\loc}(I),
\]
which ensure weak existence and uniqueness in law up to the exit time;
see \cite[Theorem~5.5.15]{KaratzasShreve1991} and
\cite{EngelbertSchmidt1985}.  Our argument passes to natural scale and uses the associated symmetric
$L^2$ operator.  For positive times, the semigroup maps $L^2$ data into
functions that are $C^1$ in space with locally absolutely continuous
first derivative, and the Kolmogorov equation holds almost everywhere.
If the drift, second-order coefficient, and bounded potential are
continuous, this yields interior $C^{1,2}$ regularity.

We also consider boundary behavior.  The zero-boundary form domain corresponds to the killed diffusion, while the maximal form
domain gives the symmetric reflected extension; see
\cite{FukushimaOshimaTakeda2010,Fukushima2010,Fukushima2014}.
We identify the corresponding semigroups with their Feynman--Kac
representations and also treat nonzero, time-independent  Dirichlet data by an
affine function of the scale variable.

\Cref{ex:jump-coefficients}  shows that none of the separate continuity
hypotheses on $a=\sigma^2/2$, $b$, and the potential $q$ can be dropped without replacement:
a discontinuity in any one of these coefficients can destroy twice
differentiability of the Feynman--Kac value.  In
addition, using the construction in
\cite[proof of Theorem~4]{EscauriazaMontaner2017}, we give a
time-homogeneous two-dimensional example with $b=q=0$ and a continuous
uniformly elliptic coefficient matrix for which the stationary
Feynman--Kac value is not $C^2$.

Regularity is separate from uniqueness.  On unbounded intervals,
additional growth or integrability conditions may be needed for
uniqueness among classical solutions.  Strict local martingales provide
standard examples in which several classical solutions have the same
terminal data; see
\cite{CoxHobson2005,EkstromTysk2009,BayraktarXing2010,Ruf2013}.

The paper is organized as follows.  \Cref{sec:ES} introduces the
scale--speed and $L^2$ framework and proves the two
regularity lemmas used below.  \Cref{sec:main} constructs the semigroup
with bounded Borel potential, proves its interior regularity, records
the Dirichlet and Neumann-type boundary behavior, and treats nonzero, 
time-independent Dirichlet data.  \Cref{sec:FK} gives the corresponding
Feynman--Kac representations for the killed and reflected diffusions
and records the classical verification argument.  Finally,
\Cref{sec:sharpness} gives an example showing the
sharpness of the continuity assumptions and a two-dimensional example
showing that, already in dimension two, continuity and uniform
ellipticity of the diffusion matrix do not imply interior $C^{1,2}$
regularity.

\section{Engelbert--Schmidt diffusions and the symmetric framework}
\label{sec:ES}

We first recall the Engelbert--Schmidt conditions, the transformation to
natural scale, and the $L^2$
framework.  We then introduce the form domains and prove the two regularity lemmas used throughout the paper.

Throughout, $I=(\ell,r)$ is a nonempty open interval, possibly with infinite
endpoints.  We impose the Engelbert--Schmidt conditions
\begin{equation}\label{eq:ES}
  \sigma(x)\ne0\quad(x\in I),
  \qquad
  \frac{1}{\sigma^2},\ \frac{b}{\sigma^2}\in L^1_{\loc}(I),
\end{equation}
where $b$ and $\sigma$ are Borel functions.  Equivalently,
\begin{equation*}
  a>0,\qquad \frac1a,\ \frac ba\in L^1_{\loc}(I),
  \qquad a=\frac{\sigma^2}{2}.
\end{equation*}
Under \eqref{eq:ES}, for every $x\in I$, equation
\eqref{eq:sde-intro} admits a weak solution that is unique in law up to its
possibly finite exit time $\zeta$ from $I$; see
\cite[Theorem~5.5.15]{KaratzasShreve1991}.

Fix $x_0\in I$.  Define the scale density and scale function by
\begin{equation*}
  \rho(x)=
  \exp\!\left(-\int_{x_0}^x\frac{b(z)}{a(z)}\,\dd z\right),
  \qquad
  s(x)=\int_{x_0}^x\rho(z)\,\dd z.
\end{equation*}
Then $\rho$ is strictly positive and belongs to $\AC_{\loc}(I)$,
while $s\in C^1(I)$ is strictly increasing with $s'=\rho$, and
$\rho'=-(b/a)\rho$ almost everywhere.
Set now $J=s(I)=(\alpha,\beta)$, where either endpoint may be infinite.
By the standard scale transformation for one-dimensional diffusions
\cite[Section~5.5.B]{KaratzasShreve1991}, the process $Y_t=s(X_t)$, $0\le t<\zeta$, satisfies
\begin{equation*}
  \dd Y_t=\widetilde\sigma(Y_t) \dd W_t,
  \qquad
  \widetilde\sigma(s(x))=\rho(x)\sigma(x),
\end{equation*}
and is therefore in natural scale, with the same lifetime $\zeta$.  Its  coefficient $\widetilde\sigma$ again satisfies
the corresponding Engelbert--Schmidt condition.

 We use the
symmetrizing measure
\begin{equation*}
  \mu(\dd x)=\frac{\dd x}{a(x)\rho(x)}
          =\frac{2\,\dd x}{\sigma^2(x)\rho(x)}.
\end{equation*}
Since $\rho$ is strictly positive and continuous and
$1/a\in L^1_{\loc}(I)$, the measure $\mu$ is locally finite and has
full support on $I$.
Its push-forward by $s$ is $w(y)\,\dd y$, where
\begin{equation}\label{eq:w}
  w(s(x))=\frac{1}{a(x)\rho^2(x)}
         =\frac{2}{\sigma^2(x)\rho^2(x)}.
\end{equation}
Hence $w$ is strictly positive and belongs to $L^1_{\loc}(J)$.

For the symmetric $L^2$ formulation, let
$
  H=L^2(J,w(y)\,\dd y)
$
and define
\begin{align} \label{eq:Vmax}
  \cV_{\max}
  =
  \left\{
    v\in H:
    v\text{ has an }\AC_{\loc}(J)\text{ representative with }
    v'\in L^2(J)
  \right\}.
\end{align}
Since $w>0$, this representative is unique, and we identify $v$ with it.
We call a linear subspace $\cV\subset\cV_{\max}$ admissible if
$
  C_c^\infty(J)\subset\cV
$
and $\cV$ is complete under the norm
\[
  \|v\|_{\cV}^2
  =
  \|v\|_H^2+\|v'\|_{L^2(J)}^2.
\]
In particular, $\cV_{\max}$ itself is admissible. 
For any admissible $\cV$, define
\begin{equation*}
  \cE(v,z)=\int_J v'(y)z'(y)\,\dd y,
  \qquad v,z\in\cV.
\end{equation*}
Then $(\cE,\cV)$ is a densely defined, closed, nonnegative symmetric
form on $H$.  By the representation theorem for closed symmetric forms
\cite[Theorem~1.3.1 and Corollary~1.3.1]{FukushimaOshimaTakeda2010},
there is a unique nonpositive self-adjoint operator $A$ on $H$
characterized by
\begin{equation}\label{eq:generator-form}
  \cE(v,z)=-\langle Av,z\rangle_H,
  \qquad v\in\Dom(A),\ z\in\cV.
\end{equation}

For certain admissible choices of $\cV$, $(\cE,\cV)$ is a regular
Dirichlet form and hence defines a symmetric Markov process.  These
processes are symmetric extensions of the minimal diffusion and agree
with it up to the first boundary hit; see
\cite{Fukushima2010,Fukushima2014}.  The regularity argument below does
not use this Markovian structure.
For the probabilistic Feynman--Kac identifications in \Cref{sec:FK},
we shall use the two canonical form domains $\cV_0$ and
$\cV_{\max}$. The domain $\cV_{\max}$ was defined in \eqref{eq:Vmax}, while
\begin{equation*}
  \cV_0
  =
  \left\{
    v\in\cV_{\max}:
    v(\alpha)=0\ \text{if }\alpha>-\infty,\
    v(\beta)=0\ \text{if }\beta<\infty
  \right\}.
\end{equation*}
Here the boundary values at finite endpoints of $J$ are understood as
limits from within $J$, which exist because $v'\in L^2(J)$. The space
$\cV_0$ is also admissible. In \Cref{sec:FK}, the semigroups associated with $\cV_0$ and
$\cV_{\max}$ are represented probabilistically by the killed and
reflected diffusions, respectively.

We use two elementary lemmas.  The first gives local spatial regularity
for functions in the operator domain; the second shows that the
semigroup maps $H$-data into the operator domain at positive times.

\begin{lemma}[Operator-domain regularity]\label{lem:domain}
Let $J$ be an open interval and let $w:J\to(0,\infty)$ be Borel with
$w\in L^1_{\loc}(J)$.  Let $\cV$ be admissible in
$H=L^2(J,w(y)\,\dd y)$, and let $\cE$ and $A$ be as above.
Then every  $v\in\Dom(A)$ belongs to $C^1(J)$, with
$v'\in\AC_{\loc}(J)$, and
\begin{align} \label{eq:260820}
  v''=wAv,
  \quad\text{almost everywhere}.
\end{align}
Moreover, for every compact interval $K\subset J$, the restriction map
$\Dom(A)\to C^1(K)$, where $\Dom(A)$ is equipped with the graph norm,
is continuous.
\end{lemma}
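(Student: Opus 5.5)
Fix $v\in\Dom(A)$ and set $f=Av\in H$. The plan is to test \eqref{eq:generator-form} against $z\in C_c^\infty(J)\subset\cV$, which is allowed because $\cV$ is admissible. This gives
\[
  \int_J v'(y)z'(y)\,\dd y=-\int_J f(y)z(y)w(y)\,\dd y
  \qquad (z\in C_c^\infty(J)).
\]
First I check that $fw\in L^1_{\loc}(J)$. By Cauchy--Schwarz, $\int_K|f|w\le\|f\|_H\bigl(\int_K w\bigr)^{1/2}$ for every compact $K\subset J$, and this is finite since $w\in L^1_{\loc}(J)$. Fix $y_0\in J$ and put $F(y)=\int_{y_0}^y f(t)w(t)\,\dd t$, which lies in $\AC_{\loc}(J)$ with $F'=fw$ almost everywhere. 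Integrating by parts against the compactly supported $z$ turns the identity into $\int_J (v'-F)z'\,\dd y=0$ for all $z\in C_c^\infty(J)$. By the du Bois-Reymond lemma, $v'-F=c$ almost everywhere for some constant $c$. So $v'$ has the continuous, locally absolutely continuous representative $F+c$. Since $v\in\AC_{\loc}(J)$ already, $v(y)=v(y_0)+\int_{y_0}^y v'$ with $v'$ continuous, which gives $v\in C^1(J)$. Moreover $v''=F'=wAv$ almost everywhere, which is \eqref{eq:260820}.

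For the continuity of the restriction map, fix a compact interval $K=[k_1,k_2]\subset J$ and enlarge it slightly to a compact interval $K'\subset J$ of positive length, with $y_0\in K$. I will bound $\sup_K|v|+\sup_K|v'|$ by a constant times $\|v\|_H+\|v'\|_{L^2}+\|Av\|_H$. This suffices because $\|v'\|_{L^2}^2=\cE(v,v)=-\langle Av,v\rangle_H\le\|Av\|_H\|v\|_H$, so the right-hand side is controlled by the graph norm. The estimate splits into three steps.

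\emph{Oscillation of $v'$.} On $K'$, $|v'(y)-v'(y')|\le\int_{K'}|f|w\le\|f\|_H\bigl(\int_{K'}w\bigr)^{1/2}$.

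\emph{Size of $v'$.} Averaging $v'$ over $K'$ and using Cauchy--Schwarz, some point of $K'$ has $|v'|\le|K'|^{-1/2}\|v'\|_{L^2}$. Combined with the oscillation bound, this gives a uniform bound on $\sup_{K'}|v'|$.

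\emph{Size of $v$.} This is the step needing care, because $H$-norm control of $v$ is weighted by $w$, and $w$ may be small while only $w\in L^1_{\loc}$ is known. The fix is to use $\int_{K'}w>0$, which holds since $w>0$. By Cauchy--Schwarz,
\[
  \Bigl|\int_{K'}vw\Bigr|\le\|v\|_H\Bigl(\int_{K'}w\Bigr)^{1/2},
\]
so the $w$-weighted average of $v$ over $K'$ is controlled, and hence some point $y_1\in K'$ has $|v(y_1)|\le\|v\|_H(\int_{K'}w)^{-1/2}$. The bound on $\sup_{K'}|v'|$ and the mean value theorem then control $\sup_{K'}|v|$.

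All three estimates are linear in $v$. Since $\Dom(A)$ is a linear space and the restriction map is linear, boundedness in the graph norm is exactly the required continuity into $C^1(K)$.
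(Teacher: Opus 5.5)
Your proposal is correct and follows the paper's proof essentially step for step. You test the form identity against $C_c^\infty(J)$ to get $v''=wAv\in L^1_{\loc}(J)$, making the paper's distributional-derivative step explicit via du Bois-Reymond. You then bound $v$ on an enlarged compact interval through its $w$-weighted average, $v'$ through its $L^2$-average plus the oscillation $\int_{K'}|Av|\,w\,\dd y$, and $\|v'\|_{L^2(J)}$ through $\cE(v,v)\le\|Av\|_H\|v\|_H$, exactly as the paper does.

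The differences are cosmetic: you control $\sup_{K'}|v|$ via $\sup_{K'}|v'|$ rather than directly via $|K_0|^{1/2}\|v'\|_{L^2(K_0)}$. Your choice of $y_1$ also needs a one-line justification, either the intermediate value theorem for the continuous $v$ or Cauchy--Schwarz applied to $\int_{K'}|v|\,w\,\dd y$.
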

\begin{proof}
Let $v\in\Dom(A)$.  Since
$
  \Dom(A)\subset\cV\subset\cV_{\max}
$, 
the function $v$ is locally absolutely continuous and
$v'\in L^2(J)$.  Moreover, since
$C_c^\infty(J)\subset\cV$, \eqref{eq:generator-form} gives, for every
$\varphi\in C_c^\infty(J)$,
\[
  \int_J v'(y)\varphi'(y)\,\dd y
  =
  -\int_J Av(y)\varphi(y)w(y)\,\dd y.
\]
The right-hand side is locally integrable, since for every compact interval
$K_0\subset J$,
\[
  \int_{K_0}|Av(y)|w(y)\,\dd y
  \le
  \left(\int_{K_0}|Av(y)|^2w(y)\,\dd y\right)^{1/2}
  \left(\int_{K_0}w(y)\,\dd y\right)^{1/2}.
\]
Thus the distributional derivative of $v'$ is $wAv\in L^1_{\loc}(J)$.
Consequently, $v'\in\AC_{\loc}(J)$, hence $v\in C^1(J)$, and
\eqref{eq:260820} holds.

  Choose now a nondegenerate
compact interval $K_0\subset J$ containing $K$ and set
$
  M_0=\int_{K_0}w(y)\,\dd y>0$.
For $x,y\in K_0$, Cauchy--Schwarz gives
\[
  |v(x)|
  \le |v(y)|+|K_0|^{1/2}\|v'\|_{L^2(K_0)}.
\]
Integrating in $y$ against $w(y)\,\dd y/M_0$ and applying
Cauchy--Schwarz once more yields
\begin{align} \label{eq:260823.1}
  \|v\|_{L^\infty(K)} \leq \|v\|_{L^\infty(K_0)}
  \le
  M_0^{-1/2}\|v\|_H
  +|K_0|^{1/2}\|v'\|_{L^2(J)}.
\end{align}
On the other hand, taking $z=v$ in \eqref{eq:generator-form} gives
\[
  \|v'\|_{L^2(J)}^2
  =\cE(v,v)
  =-\langle Av,v\rangle_H
  \le \|Av\|_H\|v\|_H,
\]
and hence
\begin{align} \label{eq:260823.2}
  \|v'\|_{L^2(J)}
  \le \frac12\bigl(\|Av\|_H+\|v\|_H\bigr).
\end{align}
Finally, since $v'\in\AC(K_0)$, we may choose
$y_0\in K_0$ such that
$
  |v'(y_0)|
  \le|K_0|^{-1/2}\|v'\|_{L^2(K_0)}$.
Hence, for $x\in K_0$,
\begin{align*}
  |v'(x)|
  &\le
  |K_0|^{-1/2}\|v'\|_{L^2(J)}
  +\int_{K_0}|v''(y)|\,\dd y,
\end{align*}
yielding
\begin{align} \label{eq:260823.3}
    \|v'\|_{L^\infty(K)} \leq \|v'\|_{L^\infty(K_0)}
   &\le
  |K_0|^{-1/2}\|v'\|_{L^2(J)}
  +M_0^{1/2}\|Av\|_H.
\end{align}
Combining \eqref{eq:260823.1} and \eqref{eq:260823.3} with \eqref{eq:260823.2} proves the existence of $C_K>0$
such that
\begin{equation*}
  \|v\|_{C^1(K)}
  \le C_K\bigl(\|v\|_H+\|Av\|_H\bigr),
  \qquad v\in\Dom(A),
\end{equation*}
hence the asserted continuity follows.
\end{proof}

\begin{lemma}[Semigroup regularization into operator domains]
\label{lem:semigroup-domain}
Let $H$ be a Hilbert space and let $L$ be a self-adjoint operator on
$H$ that is bounded above.  For $g,h\in H$, define
\[
  z(t)=e^{tL}g-\int_0^t e^{rL}h\,\dd r.
\]
Then
$
  z\in C^\infty((0,\infty);\Dom(L))
$,
where $\Dom(L)$ is equipped with its graph norm, and
\begin{align}\label{eq:260822.2}
  z'(t)=Lz(t)-h,
  \qquad t>0.
\end{align}
\end{lemma}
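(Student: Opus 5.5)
The plan is to use the spectral theorem for $L$. Since $L$ is self-adjoint and bounded above, say $L\le \omega$, functional calculus applies on $\sigma(L)\subset(-\infty,\omega]$. I would split $z$ into the homogeneous part $u(t)=e^{tL}g$ and the Duhamel-type part $p(t)=\int_0^t e^{rL}h\,\dd r$ and treat each separately. For the second term, the key identity is $\int_0^t e^{rL}h\,\dd r = \varphi_t(L)h$ with $\varphi_t(\lambda)=\int_0^t e^{r\lambda}\dd r$. This function is bounded on $(-\infty,\omega]$, and $\lambda\varphi_t(\lambda)=e^{t\lambda}-1$ is bounded as well. Hence $p(t)\in\Dom(L)$ for all $t\ge0$, with $Lp(t)=e^{tL}h-h$.

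For $u$, I would show that for every $k,m\ge0$ the map $t\mapsto L^m e^{tL}g$ is $k$ times differentiable in $H$ on $(0,\infty)$ with derivative $L^{m+k}e^{tL}g$. The spectral representation gives
\[
\Bigl\|\frac{L^m e^{(t+\varepsilon)L}g-L^m e^{tL}g}{\varepsilon}-L^{m+1}e^{tL}g\Bigr\|^2=\int \lambda^{2m}e^{2t\lambda}\Bigl|\frac{e^{\varepsilon\lambda}-1}{\varepsilon}-\lambda\Bigr|^2\dd\|E_\lambda g\|^2 .
\]
For $t\ge t_0>0$ and $|\varepsilon|\le t_0/2$, the integrand is dominated by $C(t_0,m)\,e^{\text{const}\cdot\omega}$ uniformly in $\lambda\le\omega$. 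The domination comes from $|\lambda|^n e^{t_0\lambda/2}$ being bounded on $(-\infty,\omega]$ together with the mean value estimate $|(e^{\varepsilon\lambda}-1)/\varepsilon-\lambda|\le|\varepsilon|\lambda^2e^{|\varepsilon||\lambda|}$. Its pointwise limit is $0$, so dominated convergence gives the derivative. The same argument with $m$ and $m+1$ shows that $u$ is differentiable in the graph norm, with $u'=Lu$. Iterating, $u\in C^\infty((0,\infty);\Dom(L))$; continuity of each derivative follows the same way.

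For $p$, on $(0,\infty)$ I have $p'(t)=e^{tL}h$ in $H$, which follows from strong continuity. To get differentiability in the graph norm, I would also need $Lp(t)=e^{tL}h-h$ to be differentiable. This holds on $(0,\infty)$ by the previous paragraph, with derivative $Le^{tL}h$. Thus $p'(t)=e^{tL}h$ in $\Dom(L)$-norm for $t>0$, and its higher derivatives are handled by the $u$-argument with $g$ replaced by $h$. Combining the two parts gives $z\in C^\infty((0,\infty);\Dom(L))$ and
\[
z'(t)=Le^{tL}g-e^{tL}h=L e^{tL}g-\bigl(Lp(t)+h\bigr)=Lz(t)-h,
\]
which is \eqref{eq:260822.2}.

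The main obstacle is the uniform domination in the dominated-convergence step. One must use $t_0>0$ so that the factor $e^{t\lambda}$ absorbs polynomial growth in $\lambda$ as $\lambda\to-\infty$, while on the bounded part $[0,\omega]$ everything is trivially bounded. That is precisely why smoothness holds only for $t>0$ when $g$ is arbitrary in $H$.
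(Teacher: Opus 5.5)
Your proof is correct. Its skeleton matches the paper's: show $z(t)\in\Dom(L)$, obtain $z'=Lz-h$, iterate to $z^{(m)}(t)=L^me^{tL}g-L^{m-1}e^{tL}h$, and deduce graph-norm smoothness from the $H$-differentiability of both $z^{(m)}$ and $Lz^{(m)}$. Where you differ is in the tools used for the two sub-steps.

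\emph{The Duhamel term.} The paper does not use functional calculus here. It computes
$\varepsilon^{-1}(e^{\varepsilon L}-\mathrm{Id})\int_0^te^{rL}h\,\dd r=\varepsilon^{-1}\bigl(\int_t^{t+\varepsilon}-\int_0^\varepsilon\bigr)e^{rL}h\,\dd r\to e^{tL}h-h$
and then appeals to the definition of the generator. This argument is valid for any strongly continuous semigroup.

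\emph{Continuity of $t\mapsto L^ke^{tL}f$.} The paper uses the factorization $L^ke^{tL}f=e^{(t-t_0/2)L}L^ke^{(t_0/2)L}f$ together with strong continuity, instead of dominated convergence in the spectral integral.

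As a result, self-adjointness enters the paper's proof only through the smoothing property $e^{tL}H\subset\Dom(L^k)$ for $t>0$. The argument therefore transfers verbatim to any $C_0$-semigroup with that property.

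Your fully spectral route is more self-contained and quantitative. Your estimate shows the difference-quotient error is $O(|\varepsilon|)$, which makes explicit the two-sided $H$-differentiability of $e^{tL}g$ at $t>0$. The paper instead takes this from standard semigroup theory.

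On the other hand, your route rests on the identity $\int_0^te^{rL}h\,\dd r=\varphi_t(L)h$, which you state without proof. It follows by pairing with an arbitrary $k\in H$ and applying Fubini to $e^{r\lambda}$. The justification is that $e^{r\lambda}$ is bounded on $[0,t]\times(-\infty,\omega]$ and the complex measure $\langle E(\dd\lambda)h,k\rangle$ has finite total variation.

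One minor point: your domination constant is not uniform in $t\ge t_0$ when $\omega>0$, because $e^{2t\lambda}$ on $[0,\omega]$ grows with $t$. Since only a fixed $t$ is needed for differentiability at $t$, this is harmless.
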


\begin{proof}
Since $L$ is bounded above, for every $f\in H$, $t>0$, and $k\in\N$,
$
  \sup_{\lambda\in\sigma(L)}
  |\lambda|^k e^{t\lambda}<\infty$.
Hence the spectral theorem gives
$
  e^{tL}f\in\Dom(L^k)
$.
Fix now $f\in H$, $k\in\N_0$, and $t_0>0$. 
For $t>t_0/2$, we have
\[
  L^k e^{tL}f
  =
  e^{(t-t_0/2)L}L^k e^{(t_0/2)L}f.
\]
The strong continuity of $(e^{tL})_{t\ge0}$ therefore shows that
$t\mapsto L^k e^{tL}f$ is continuous in $H$ on $(0,\infty)$.

Next, for $\varepsilon>0$, the semigroup property gives
\[
  \frac{e^{\varepsilon L}-\operatorname{Id}_H}{\varepsilon}\int_0^t e^{rL}h\,\dd r
  =
  \frac1\varepsilon\left(
    \int_t^{t+\varepsilon}e^{rL}h\,\dd r
    -
    \int_0^\varepsilon e^{rL}h\,\dd r
  \right).
\]
By strong continuity, the right-hand side converges in $H$ to
$e^{tL}h-h$ as $\varepsilon\downarrow0$.  Since $L$ is the generator
of $(e^{tL})_{t\ge0}$, it follows that
\[
  \int_0^t e^{rL}h\,\dd r\in\Dom(L),
  \qquad
  L\int_0^t e^{rL}h\,\dd r=e^{tL}h-h;
\]
in particular, $z(t)\in\Dom(L)$.

As an $H$-valued function, $z$ is differentiable for $t>0$, with
\[
  z'(t)=Le^{tL}g-e^{tL}h=Lz(t)-h.
\]
This proves \eqref{eq:260822.2}.
Iterating gives, for every $m\in\N$,
\begin{align} \label{eq:260825.1}
  z^{(m)}(t)
  =
  L^m e^{tL}g-L^{m-1}e^{tL}h.
\end{align}
By the spectral estimate and the continuity established above, \eqref{eq:260825.1} shows that, for every $m\in\N$, $z^{(m)}(t)\in\Dom(L)$ and
both $z^{(m)}$ and $Lz^{(m)}$ are continuous as $H$-valued
functions.  For $m=0$, \eqref{eq:260822.2} gives $Lz=z'+h$, which
is continuous in $H$.  Thus $z^{(m)}$ is graph-norm continuous for
every $m\in\N_0$. Finally, \eqref{eq:260825.1}  and \eqref{eq:260822.2} show that $Lz^{(m)}$ is
differentiable in $H$ with derivative $Lz^{(m+1)}$ for every
$m\in\N_0$.  Since $z^{(m)}$ has $H$-valued derivative
$z^{(m+1)}$, it has graph-norm derivative $z^{(m+1)}$.  Hence
$z\in C^\infty((0,\infty);\Dom(L))$.
\end{proof}

\section{Regularity and boundary conditions with bounded potentials}
\label{sec:main}

In this section, we construct the $L^2$ semigroup with bounded
potential and establish its interior regularity.   We then identify the boundary behavior
for the canonical Dirichlet and Neumann-type conditions and finally
treat nonzero, time-independent Dirichlet data.

Fix an admissible form domain $\cV$ as in \Cref{sec:ES}, and let $A$
be its associated operator on
$
  H=L^2(J,w(y)\,\dd y)
$.
Since the push-forward of $\mu$ under the scale function $s$ is
$w(y)\,\dd y$, the map
\[
  U:L^2(I,\mu)\longrightarrow H,
  \qquad
  Uf=f\circ s^{-1},
\]
is unitary.
Let $q:I\to\mathbb R$ be bounded and Borel and let $Q$ denote the bounded self-adjoint
multiplication operator on $H$ by $q\circ s^{-1}$.

The semigroup is obtained by the following bounded self-adjoint
perturbation.

\begin{proposition}[$L^2$ semigroup with bounded potential]
\label{prop:L2-semigroup}
Assume \eqref{eq:ES}, let $\cV$ be admissible with associated operator
$A$, and let $q:I\to\mathbb R$ be bounded and Borel.  Let $Q$ be
multiplication on $H$ by $q\circ s^{-1}$.  Then
\[
  B=A-Q,\qquad \Dom(B)=\Dom(A),
\]
is self-adjoint and bounded above by $\|q^-\|_\infty$.  Moreover,
\begin{equation}\label{eq:L2-conjugation}
  T_t^q=U^{-1}e^{tB}U,\qquad t\ge0,
\end{equation}
defines a strongly continuous semigroup on $L^2(I,\mu)$ satisfying
\begin{equation}\label{eq:FK-L2-bound}
  \|T_t^qf\|_{L^2(I,\mu)}
  \le e^{t\|q^-\|_\infty}\|f\|_{L^2(I,\mu)},
  \qquad f\in L^2(I,\mu).
\end{equation}
\end{proposition}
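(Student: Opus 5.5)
The plan is to treat $B=A-Q$ as a bounded self-adjoint perturbation of the nonpositive self-adjoint operator $A$ given by \eqref{eq:generator-form}, and then transport everything to $L^2(I,\mu)$ through the unitary $U$.

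\textbf{Step 1: $Q$ is bounded and self-adjoint.} Since $q$ is bounded and Borel, $q\circ s^{-1}$ is a bounded real Borel function on $J$. Here $s^{-1}$ is continuous, so measurability is preserved. Multiplication by a bounded real function on $H=L^2(J,w(y)\,\dd y)$ is a bounded self-adjoint operator with $\|Q\|\le\|q\|_\infty$.

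\textbf{Step 2: self-adjointness of $B$.} I would invoke the elementary fact (Kato--Rellich with relative bound zero, or a direct check) that a self-adjoint operator plus a bounded symmetric operator is self-adjoint on the same domain. The direct check runs as follows. Suppose $u,u^*\in H$ satisfy $\langle Bv,u\rangle_H=\langle v,u^*\rangle_H$ for all $v\in\Dom(A)$. Then $\langle Av,u\rangle_H=\langle v,u^*+Qu\rangle_H$. Self-adjointness of $A$ gives $u\in\Dom(A)$, and hence $B^*=B$.

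\textbf{Step 3: upper bound.} For $v\in\Dom(A)$, using $\langle Av,v\rangle_H=-\cE(v,v)\le0$ we get
\[
\langle Bv,v\rangle_H=-\cE(v,v)-\int_J (q\circ s^{-1})|v|^2w\,\dd y\le \int_J (q^-\circ s^{-1})|v|^2 w\,\dd y\le\|q^-\|_\infty\|v\|_H^2 .
\]
Thus $\sigma(B)\subset(-\infty,\|q^-\|_\infty]$.

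\textbf{Step 4: the semigroup and its bound.} The spectral theorem then defines $e^{tB}$ as a strongly continuous semigroup with $\|e^{tB}\|\le\sup_{\lambda\in\sigma(B)}e^{t\lambda}\le e^{t\|q^-\|_\infty}$. Strong continuity follows from dominated convergence in the spectral representation. Conjugating by the unitary $U$, obtained from the push-forward identity $s_\#\mu=w\,\dd y$, preserves the semigroup property, strong continuity, and norms. This gives \eqref{eq:L2-conjugation} and \eqref{eq:FK-L2-bound}.

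No step is a serious obstacle. The only point needing care is Step 2 (self-adjointness under bounded perturbation, with domain unchanged), which is standard.
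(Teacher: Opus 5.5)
Your proposal is correct and follows the paper's proof essentially step for step: $B$ is a bounded self-adjoint perturbation of $A$, the upper bound comes from $\langle Av,v\rangle_H=-\cE(v,v)\le0$ and $-\langle Qv,v\rangle_H\le\|q^-\|_\infty\|v\|_H^2$, and the semigroup and its bound then follow from the spectral theorem and conjugation by the unitary $U$. The only difference is that you verify the bounded-perturbation self-adjointness directly, whereas the paper simply states it.
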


\begin{proof}
Since $Q$ is bounded and self-adjoint and $A$ is self-adjoint,
$B=A-Q$ with $\Dom(B)=\Dom(A)$ is self-adjoint.  Moreover, since
$\langle Av,v\rangle_H=-\cE(v,v)\le0$ and
$-\langle Qv,v\rangle_H\le\|q^-\|_\infty\|v\|_H^2$,
$B$ is bounded above by $\|q^-\|_\infty$.

Hence the spectral theorem shows that $(e^{tB})_{t\ge0}$ is a
strongly continuous semigroup on $H$ and
\begin{align} \label{eq:260823.4}
  \|e^{tB}h\|_H
  \le e^{t\|q^-\|_\infty}\|h\|_H,
  \qquad h\in H.
\end{align}
Since $U$ is unitary, \eqref{eq:L2-conjugation} defines a strongly
continuous semigroup on $L^2(I,\mu)$, and \eqref{eq:260823.4} yields
\eqref{eq:FK-L2-bound}.
\end{proof}

We next prove the interior regularity of the semigroup in
\Cref{prop:L2-semigroup}.

\begin{theorem}[Interior regularity under Engelbert--Schmidt conditions]
\label{thm:main}
Assume \eqref{eq:ES}, let $\cV$ be an admissible form domain with
associated operator $A$, let $q:I\to\mathbb R$ be bounded and Borel,
and recall the strongly continuous semigroup $(T_t^q)_{t\ge0}$ from
\Cref{prop:L2-semigroup}. Let $f\in L^2(I,\mu)$.  For every $t>0$, $T_t^qf$ admits a unique
continuous representative, which we identify with $T_t^qf$.
Furthermore, the following statements hold with $u(t,x)=T_t^qf(x)
$.
\begin{enumerate}[label=\textup{(\roman*)},ref=\textup{(\roman*)}]
\item\label{item:main-a}
For every $k\in\N_0$, $\partial_t^ku$ and
$\partial_x\partial_t^ku$ exist on $(0,\infty)\times I$ and are jointly
continuous there. 

\item\label{item:main-b}
For every $t>0$, $\partial_xu(t,\cdot)\in\AC_{\loc}(I)$, and for almost every $x\in I$,
\begin{equation}\label{eq:a.e.-PDE}
  \partial_tu(t,x)
  =
  a(x)\partial_{xx}u(t,x)
  +b(x)\partial_xu(t,x)-q(x)u(t,x).
\end{equation}
For every $k\in\N_0$, the same assertions hold with $u$ replaced by
$\partial_t^ku$.

\item\label{item:main-c}
If $a$, $b$, and $q$ are continuous on $I$, then
\[
  u\in C^{1,2}((0,\infty)\times I)
\]
and \eqref{eq:a.e.-PDE} holds pointwise on $(0,\infty)\times I$.  Moreover,
$\partial_t^ku\in C^{1,2}((0,\infty)\times I)$ for every $k\in\N_0$.
\end{enumerate}
In addition,
$
  u(t,\cdot)\to f
$
in $L^2(I,\mu)$ as $t\downarrow0$.
\end{theorem}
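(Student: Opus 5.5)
The plan is to work entirely in natural scale. Set $h=Uf\in H$ and $z(t)=e^{tB}h$. Then combine \Cref{lem:semigroup-domain}, applied with $L=B$, $g=h$ and zero inhomogeneity, with \Cref{lem:domain}, and afterwards transport back to $I$ through $s$.

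First, \Cref{prop:L2-semigroup} says $B$ is self-adjoint and bounded above with $\Dom(B)=\Dom(A)$. Since $Q$ is bounded, the graph norms of $A$ and $B$ are equivalent. So \Cref{lem:semigroup-domain} gives $z\in C^\infty((0,\infty);\Dom(A))$ with $z^{(k)}(t)=B^ke^{tB}h\in\Dom(A)$ and $z'=Bz$. By \Cref{lem:domain}, each $z^{(k)}(t)$ has a unique $C^1(J)$ representative; uniqueness holds because $w>0$ and the function is continuous. Moreover, the restriction $\Dom(A)\to C^1(K)$ is continuous for each compact $K\subset J$. Composing gives $t\mapsto z^{(k)}(t)|_K\in C^1(K)$ smooth in $t$.

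Define $v(t,y)=z(t)(y)$. The $C^1(K)$-valued smoothness shows that $\partial_t^kv(t,y)=z^{(k)}(t)(y)$ holds pointwise. Here we use that pointwise evaluation and evaluation of $\partial_y$ are continuous functionals on $C^1(K)$, so the $C^1(K)$-derivative is the pointwise one. It also shows that $\partial_t^kv$ and $\partial_y\partial_t^kv$ are jointly continuous on $(0,\infty)\times J$, since $C^1(K)$-continuity in $t$ together with spatial continuity yields joint continuity. Also, \eqref{eq:260820} gives
\[
\partial_{yy}\partial_t^k v=wA z^{(k)}=w\bigl(z^{(k+1)}+(q\circ s^{-1})z^{(k)}\bigr)
\]
almost everywhere, with $\partial_y\partial_t^kv(t,\cdot)\in\AC_{\loc}(J)$.

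Next, transfer to $I$ via $u(t,x)=v(t,s(x))$. Since $s\in C^1$ with $s'=\rho>0$ and $\rho\in\AC_{\loc}$, we get $\partial_xu=\rho\,\partial_yv\circ s$. A product of $\AC_{\loc}$ functions is $\AC_{\loc}$, and composition with the $C^1$ diffeomorphism $s$ preserves $\AC_{\loc}$. This gives \ref{item:main-a} and the regularity part of \ref{item:main-b}. For the equation, compute almost everywhere:
\[
\partial_{xx}u=\rho'\,\partial_yv\circ s+\rho^2\,\partial_{yy}v\circ s=-\tfrac ba\partial_xu+\rho^2 w(s)\bigl(\partial_tu+qu\bigr).
\]
Now use $\rho^2w\circ s=1/a$ from \eqref{eq:w} and multiply by $a$, which yields \eqref{eq:a.e.-PDE}. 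One technical point in this step is that null sets must correspond under $s$; they do, because $s$ is a $C^1$ diffeomorphism whose inverse is locally Lipschitz, so $s^{-1}$ maps null sets to null sets. The same computation applied to $z^{(k)}$ gives the statement for $\partial_t^ku$.

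For \ref{item:main-c}, rewrite the a.e. identity as
\[
\partial_{xx}u=\frac{\partial_tu+qu-b\,\partial_xu}{a}.
\]
When $a,b,q$ are continuous with $a>0$, the right-hand side is jointly continuous. Since $\partial_xu(t,\cdot)$ is $\AC_{\loc}$ with an a.e. derivative equal to a continuous function, it is $C^1$ and $\partial_{xx}u$ equals that continuous function everywhere. This gives joint continuity of $\partial_{xx}u$, hence $u\in C^{1,2}$, with the equation holding pointwise. The same argument applies to $\partial_t^ku$, using $\partial_t^{k+1}u$ continuous. Finally, $u(t,\cdot)\to f$ in $L^2(I,\mu)$ follows from the strong continuity in \Cref{prop:L2-semigroup}.

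The main obstacle I expect is the justification of pointwise time derivatives and joint continuity, which means passing from graph-norm smoothness to classical partial derivatives. That is handled by the $C^1(K)$ continuity estimate in \Cref{lem:domain}.
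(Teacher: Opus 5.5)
Your proposal is correct and follows essentially the same route as the paper. You apply \Cref{lem:semigroup-domain} with $L=B$ and zero inhomogeneity, and use the equivalence of the graph norms of $A$ and $B$ together with the $C^1(K)$ continuity in \Cref{lem:domain} to get smoothness into $C^1(K)$ and joint continuity. You then transfer through the scale function using $\rho'=-(b/a)\rho$, $\rho^2\, w\circ s=1/a$ and the null-set preservation of $s^{-1}$, and the fundamental theorem of calculus gives part (iii); your explicit remark that evaluation functionals make the $C^1(K)$-valued time derivatives coincide with pointwise partial derivatives is a detail the paper leaves implicit.
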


\begin{proof}
Fix $f\in L^2(I,\mu)$ and, for $t>0$, set
$
  v(t)=e^{tB}Uf
$.
Since $B$ is self-adjoint and bounded above, \Cref{lem:semigroup-domain},
applied with $L=B$, $g=Uf$, and $h=0$, gives
$
  v\in C^\infty((0,\infty);\Dom(B))
$
and
$
  v'=Bv
$.
Hence, inductively,
$
  v^{(k+1)}=Bv^{(k)}
$
for every $k\in\N_0$.

Since $B=A-Q$ with $Q$ bounded, the graph norms
of $A$ and $B$ are equivalent.  Therefore, \Cref{lem:domain} implies
that, for every compact interval $K\subset J$,
$
  v\in C^\infty((0,\infty);C^1(K))
$.
Write now $v(t,y)=v(t)(y)$. Then, for every
$k\in\N_0$, the functions $\partial_t^k v$ and
$\partial_y\partial_t^k v$ are jointly continuous on $(0,\infty)\times J$. 

Set
next $
  u(t,x)=v(t,s(x))
$.
By \eqref{eq:L2-conjugation}, $u(t,\cdot)$ represents
$T_t^q f$ in $L^2(I,\mu)$. Since $u(t,\cdot)$ is continuous and $\mu$
has full support on $I$, this continuous representative is unique.
Since $s$ does not depend on $t$, for every $k\in\N_0$,
$
  \partial_t^ku(t,x)
  =
  \partial_t^kv(t,s(x))$.
Moreover, since $s'=\rho$,
\[
  \partial_x\partial_t^ku(t,x)
  =
  \partial_y\partial_t^kv(t,s(x))\rho(x).
\]
The joint continuity established above for
$\partial_t^kv$ and $\partial_y\partial_t^kv$, together with the
continuity of $s$ and $\rho$, proves \ref{item:main-a}.

For \ref{item:main-b}, \Cref{lem:domain} gives, for every
$t>0$,
$
  \partial_yv(t,\cdot)\in\AC_{\loc}(J)
$
and
\begin{align} \label{eq:260827.1}
  \partial_{yy}v(t,\cdot)
  =
  wA v(t,\cdot)
  =
  w\bigl(\partial_t v(t,\cdot)
  +(q\circ s^{-1})v(t,\cdot)\bigr)
\end{align}
almost everywhere.

On each compact subinterval of $I$, the map $s$ is bi-Lipschitz.
Consequently,
$
  \partial_yv(t,s(\cdot))\in\AC_{\loc}(I).
$
Since $\rho\in\AC_{\loc}(I)$, it follows that
\[
  \partial_xu(t,\cdot)=\partial_yv(t,s(\cdot))\rho\in\AC_{\loc}(I).
\]
Moreover, since $s^{-1}$ is locally Lipschitz, the preimage under $s$
of every Lebesgue-null subset of $J$ is a Lebesgue-null subset of $I$.
Thus the almost-everywhere identity in \eqref{eq:260827.1} remains valid
after composition with $s$.   The chain and product rules, together with
\eqref{eq:w}
and
$
  \rho'=-(b/a)\rho
$
almost everywhere, therefore give, for almost every $x$,
\begin{align}
  \partial_{xx}u(t,x)
  &=
  \partial_{yy}v(t,s(x))\rho^2(x)
  +\partial_yv(t,s(x))\rho'(x) \nonumber\\
  &=
  \frac{\partial_t v(t,s(x))+q(x)v(t,s(x))}{a(x)}
  -\frac{b(x)}{a(x)}\partial_xu(t,x) \nonumber\\
  &=
  \frac{\partial_tu(t,x)+q(x)u(t,x)
  -b(x)\partial_xu(t,x)}{a(x)}.  \label{eq:260821.2}
\end{align}
This is equivalent to \eqref{eq:a.e.-PDE}.

For every $k\in\N_0$, the first part of the proof gives
$
  \partial_t^k v(t,\cdot)\in\Dom(A)
$
for every $t>0$, and
$
  \partial_t^{k+1}v(t,\cdot)
  =
  B\partial_t^kv(t,\cdot)
$.
Applying the same argument with $v$ and $u$ replaced by
$\partial_t^kv$ and $\partial_t^ku$, respectively, shows that
$\partial_x\partial_t^ku(t,\cdot)\in\AC_{\loc}(I)$ and that
\eqref{eq:a.e.-PDE} holds almost everywhere with $u$ replaced by
$\partial_t^ku$.  This proves \ref{item:main-b}.

If $a$, $b$, and $q$ are continuous, the expression
for $\partial_{xx}u$ in \eqref{eq:260821.2} is then jointly continuous in $(t,x)$.  Since
$\partial_xu(t,\cdot)$ is locally absolutely continuous and has this function as
its almost-everywhere derivative, the fundamental theorem of calculus shows
that $\partial_{xx}u$ exists everywhere and is jointly continuous.  Hence
$
  u\in C^{1,2}((0,\infty)\times I)$,
and the equation holds pointwise on $(0,\infty)\times I$.  Applying the same argument to
$B^kv=\partial_t^kv$ proves the assertion for every time derivative and
hence \ref{item:main-c}.

 The final assertion follows from the strong
continuity of $(T_t^q)_{t\ge0}$.
\end{proof}

\begin{remark}\label{rem:Ekstrom-Janson-Tysk}
When $I=\R$, $b=q=0$, $\sigma$ is continuous and nonzero, and the
initial datum is continuous and satisfies their global growth
condition, the positive-time $C^{1,2}$ conclusion of
\Cref{thm:main}\ref{item:main-c} follows from
\cite[Theorem~8.1 and Corollary~8.3]{EkstromJansonTysk2015}.
Compared with that result, \Cref{thm:main} allows arbitrary
$L^2(I,\mu)$ initial data, general Engelbert--Schmidt coefficients
before passage to scale, bounded Borel potentials, arbitrary intervals,
and arbitrary admissible form domains.
\end{remark}

Theorem~\ref{thm:main}  is formulated for an arbitrary admissible form domain
$\cV$.  We now record the boundary behavior for the two canonical
choices $\cV_0$ and $\cV_{\max}$.  The domain $\cV_0$ gives the
homogeneous Dirichlet condition at finite scale endpoints.  For
$\cV_{\max}$, a finite speed measure near an endpoint yields a vanishing
scale derivative; if $\rho$
is bounded there, the derivative in the original coordinate also
vanishes.  At endpoints of finite scale with finite speed measure,  this is the classical Neumann-type boundary behavior
for the associated symmetric reflected diffusion; see
\cite{FukushimaOshimaTakeda2010,Fukushima2010,Fukushima2014}.

\begin{proposition}[Boundary behavior for canonical form domains]\label{prop:boundary-realizations}
Let $u$ denote the solution from \Cref{thm:main}. The following boundary conditions hold.

\begin{enumerate}[label=\textup{(\roman*)},ref=\textup{(\roman*)}]
\item\label{item:boundary-dirichlet}
For $\cV=\cV_0$,
\[
  \lim_{x\downarrow\ell}u(t,x)=0
  \quad\text{if }s(\ell+)>-\infty,
  \qquad
  \lim_{x\uparrow r}u(t,x)=0
  \quad\text{if }s(r-)<\infty,
\]
for every $t>0$.

\item\label{item:boundary-neumann}
For $\cV=\cV_{\max}$, let $e\in\{\ell,r\}$ and suppose that $\mu$ has
finite mass near $e$. Then, for every $t>0$,
\[
  \lim_{x\to e}
  \frac{\partial_xu(t,x)}{\rho(x)}
  =
  0,
\]
where the limit is taken within $I$. If, in addition, $\rho$ is bounded
near $e$, then
$
  \lim_{x\to e}\partial_xu(t,x)=0.
$
\end{enumerate}
\end{proposition}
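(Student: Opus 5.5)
Both parts will be proved in the scale coordinate, using $v(t)=e^{tB}Uf\in\Dom(A)$ for $t>0$ and $u(t,x)=v(t,s(x))$, as in the proof of \Cref{thm:main}.

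\emph{Part (i).} Let $\cV=\cV_0$. Then $v(t)\in\Dom(A)\subset\cV_0$. If $s(\ell+)=\alpha>-\infty$, the definition of $\cV_0$ gives $v(t)(\alpha+)=0$. This limit exists because $v(t)'\in L^2(J)$. Since $s$ is increasing and continuous with $s(x)\downarrow\alpha$ as $x\downarrow\ell$, we get $u(t,x)=v(t,s(x))\to0$. The right endpoint is handled in the same way.

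\emph{Part (ii).} Let $\cV=\cV_{\max}$, and take $e=r$ for definiteness; the case $e=\ell$ is symmetric. Write $\beta=s(r-)$, possibly infinite. The speed measure near $r$ corresponds to $\int_{y_0}^{\beta}w(y)\,\dd y<\infty$ for some $y_0\in J$. Since $\partial_xu(t,x)/\rho(x)=\partial_yv(t,s(x))$, it suffices to show $\partial_yv(t,y)\to0$ as $y\uparrow\beta$. The plan has three steps.

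\begin{enumerate}[label=\textup{(\alph*)}]
\item \emph{The limit exists.} By \Cref{lem:domain}, $\partial_{yy}v=wAv$ almost everywhere. Cauchy--Schwarz gives
\[
\int_{y_0}^{\beta}|Av|\,w\,\dd y\le\|Av\|_H\Bigl(\int_{y_0}^\beta w\,\dd y\Bigr)^{1/2}<\infty.
\]
Hence $\partial_yv(t,\cdot)'\in L^1((y_0,\beta))$, so $c=\lim_{y\uparrow\beta}\partial_yv(t,y)$ exists and is finite.

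\item \emph{$c=0$ when $\beta=\infty$.} Here $\partial_yv\in L^2(J)$ forces $c=0$ immediately.

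\item \emph{$c=0$ when $\beta<\infty$.} Take a test function $z\in\cV_{\max}$ equal to $1$ near $\beta$ and equal to $0$ near $\alpha$, for instance smooth and piecewise affine. Such a $z$ lies in $H$ because $w$ is integrable near $\beta$ and $z$ is bounded with support in $(y_1,\beta)$. Plug $z$ into \eqref{eq:generator-form} and integrate by parts on $(y_1,y_2)$:
\[
\int_{y_1}^{y_2}v'z'\,\dd y=\bigl[v'z\bigr]_{y_1}^{y_2}-\int_{y_1}^{y_2}v''z\,\dd y,\qquad v''=wAv.
\]
Let $y_2\uparrow\beta$; this uses step (a) and the integrability of $wAv\,z$ near $\beta$. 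Comparing with $-\langle Av,z\rangle_H$ yields $\lim_{y\uparrow\beta}v'(y)z(y)=0$, so $c=0$.
\end{enumerate}

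If in addition $\rho$ is bounded near $e$, then $\partial_xu=\rho\,\partial_yv(t,s(\cdot))\to0$.

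The main obstacle is step (c): checking that the test function belongs to $\cV_{\max}$, and justifying the boundary term in the integration by parts. The boundary term is controlled by step (a), since $v'$ has a limit, and $z$ is constant near $\beta$, so no extra integrability is needed. In the case $\beta=\infty$, step (b) disposes of the matter directly, and whether $z=1$ near $\beta$ lies in $H$ is not needed there.
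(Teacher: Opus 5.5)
Your proposal is correct and follows the paper's proof essentially step for step. Part (i) uses $v_t\in\Dom(A)\subset\cV_0$ together with the monotonicity of $s$. Part (ii) uses integrability of $v_t''=wAv_t$ near the endpoint (Cauchy--Schwarz against the finite speed mass), the $L^2$ argument when the scale endpoint is infinite, and a cutoff test function $z\in\cV_{\max}$ equal to $1$ near the finite scale endpoint, inserted into \eqref{eq:generator-form} with integration by parts. The only difference is cosmetic: you work at the right endpoint, whereas the paper works at the left.
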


\begin{proof}
Fix $f\in L^2(I,\mu)$ and $t>0$, and set
$
  v_t=e^{tB}Uf
$.
By \Cref{lem:semigroup-domain},
$
  v_t\in\Dom(B)=\Dom(A)
$,
where $A$ is the operator associated with the form domain under
consideration.  Moreover, by the proof of \Cref{thm:main},
$
  u(t,x)=v_t(s(x))
$.

For \ref{item:boundary-dirichlet}, if $\cV=\cV_0$, then
$
  v_t\in\Dom(A)\subset\cV_0
$.
Hence
$
  v_t(y)\to0
$
as $y\downarrow\alpha$ when $\alpha>-\infty$, and as
$y\uparrow\beta$ when $\beta<\infty$.  Since
$s(x)\downarrow\alpha$ as $x\downarrow\ell$ and
$s(x)\uparrow\beta$ as $x\uparrow r$, the asserted boundary limits for
$u$ follow.

For \ref{item:boundary-neumann}, consider the left endpoint, i.e.~$e=\ell$, and suppose
that
$
  \mu((\ell,c))<\infty
$
for some $c\in I$.  Equivalently,
$
  \int_\alpha^{s(c)} w(y)\,\dd y<\infty$.
 \Cref{lem:domain} gives
$
  v_t''=wAv_t
$
almost everywhere. Hence
\[
  \int_\alpha^{s(c)} |v_t''(y)|\,\dd y
  \le
  \|Av_t\|_H
  \left(\int_\alpha^{s(c)} w(y)\,\dd y\right)^{1/2}
  <\infty.
\]
Thus $v_t'(y)$ has a finite limit as $y\downarrow\alpha$.

If $\alpha=-\infty$, this limit must be zero because
$
  v_t'\in L^2(J)
$.
If $\alpha>-\infty$, choose $z\in\cV_{\max}$ such that $z=1$ near
$\alpha$ and $z=0$ near $s(c)$.  Using
\eqref{eq:generator-form} and $v_t''=wAv_t$, integration by parts
gives
\[
  -\int_\alpha^{s(c)}v_t''(y)z(y)\,\dd y
  =
  \int_\alpha^{s(c)}v_t'(y)z'(y)\,\dd y
  =
  -\lim_{y\downarrow\alpha}v_t'(y)
  -\int_\alpha^{s(c)}v_t''(y)z(y)\,\dd y.
\]
Therefore
\[
 \lim_{x\downarrow\ell}\frac{\partial_xu(t,x)}{\rho(x)}=
  \lim_{x\downarrow\ell}v_t'(s(x)) = \lim_{y\downarrow\alpha}v_t'(y)=0.
\]
The argument at the right endpoint is analogous.  This proves
\ref{item:boundary-neumann}.
\end{proof}

We next turn to nonzero, time-independent Dirichlet data, which are handled by subtracting a function
that is affine in the scale variable. To this end, let $J=s(I)=(\alpha,\beta)$.  Prescribe
$\gamma_\ell\in\mathbb R$ if $\alpha>-\infty$ and
$\gamma_r\in\mathbb R$ if $\beta<\infty$, and define
$\xi:I\to\mathbb R$ by
\begin{align} \label{eq:260824.4}
  \xi(x)
  =
  \begin{cases}
    \displaystyle
    \gamma_\ell
    +(\gamma_r-\gamma_\ell)
      \frac{s(x)-\alpha}{\beta-\alpha},
      & \alpha>-\infty,\ \beta<\infty,\\[2ex]
    \gamma_\ell,
      & \alpha>-\infty,\ \beta=\infty,\\
    \gamma_r,
      & \alpha=-\infty,\ \beta<\infty,\\
    0,
      & \alpha=-\infty,\ \beta=\infty.
  \end{cases}
\end{align}
By construction, 
$\xi\in C^1(I)$, $\xi'\in\AC_{\loc}(I)$, and, since
$s'=\rho$ and $\rho'=-(b/a)\rho$ almost everywhere,
\[
  a\xi''+b\xi'=0
  \qquad\text{almost everywhere on }I.
\]

\begin{proposition}[Nonzero, time-independent Dirichlet data]
\label{prop:nonzero-dirichlet}
Assume \eqref{eq:ES}, let $q:I\to\mathbb R$ be bounded and Borel, and
let $f:I\to\mathbb R$ be Borel.  Suppose that
\begin{equation*}
  f-\xi\in L^2(I,\mu),
  \qquad
  q\xi\in L^2(I,\mu).
\end{equation*}
Then there exists a function
$
  u:(0,\infty)\times I\to\mathbb R
$
satisfying
\Cref{thm:main}\ref{item:main-a}--\ref{item:main-c}.
Moreover, for every $t>0$,
$
  u(t,\cdot)-\xi\in L^2(I,\mu)
$,
\begin{align} \label{eq:bus}
  \lim_{x\downarrow\ell}u(t,x)=\gamma_\ell
  \quad\text{if }s(\ell+)>-\infty,
  \qquad
  \lim_{x\uparrow r}u(t,x)=\gamma_r
  \quad\text{if }s(r-)<\infty,
\end{align}
and
$
  u(t,\cdot)- f \to 0
$
in $L^2(I,\mu)$ as $t\downarrow0$.
\end{proposition}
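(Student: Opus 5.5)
We reduce to the homogeneous Dirichlet case by subtracting $\xi$ and working with the form domain $\cV_0$. Concretely, set $g=U(f-\xi)\in H$ and $h=U(q\xi)\in H$, and let $B=A-Q$ with $A$ the operator associated with $\cV_0$, as in \Cref{prop:L2-semigroup}. We look for $u=\xi+\tilde u$, where $\tilde u$ should solve $\partial_t\tilde u=a\partial_{xx}\tilde u+b\partial_x\tilde u-q\tilde u-q\xi$, since $a\xi''+b\xi'=0$ a.e. In scale coordinates this corresponds to the $H$-valued function
\[
  z(t)=e^{tB}g-\int_0^te^{rB}h\,\dd r ,
\]
and we define $u(t,x)=\xi(x)+z(t)(s(x))$. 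By \Cref{lem:semigroup-domain} with $L=B$, we get $z\in C^\infty((0,\infty);\Dom(B))$ and $z'=Bz-h$. Equivalence of the graph norms of $A$ and $B$, together with \Cref{lem:domain}, gives $z\in C^\infty((0,\infty);C^1(K))$ for each compact $K\subset J$. Joint continuity of $\partial_t^k$ and $\partial_x\partial_t^k$ of $u$ then follows exactly as in the proof of \Cref{thm:main}\ref{item:main-a}. Here $\xi$ is time independent and $C^1$, so it only contributes $\xi'$ to the $x$-derivative.

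For \ref{item:main-b}, we proceed as before. \Cref{lem:domain} gives $\partial_{yy}z=wAz=w(\partial_tz+(q\circ s^{-1})z+h)$ a.e. The term $h$ is exactly $(q\circ s^{-1})\,\xi\circ s^{-1}$, so the chain-rule computation \eqref{eq:260821.2} yields $a\partial_{xx}(z\circ s)+b\partial_x(z\circ s)=\partial_tu+q(z\circ s)+q\xi$ a.e. Adding $a\xi''+b\xi'=0$ a.e. gives \eqref{eq:a.e.-PDE} for $u$. For time derivatives with $k\ge1$, note that $\partial_t^ku=\partial_t^kz\circ s$, with $\partial_t^{k}z=B^{k-1}(Bz-h)$ in the pattern of \eqref{eq:260825.1}, and $\partial_t^{k+1}z=B\partial_t^kz$ without inhomogeneity. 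The argument of \Cref{thm:main} therefore applies verbatim. For \ref{item:main-c}, when $a,b,q$ are continuous, $\xi''=-(b/a)\xi'$ is continuous. The a.e. expression for $\partial_{xx}u$ is then jointly continuous, and the fundamental theorem of calculus upgrade goes through as before.

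Three properties remain. First, $u(t,\cdot)-\xi=U^{-1}z(t)\in L^2(I,\mu)$ by construction. Second, the boundary limits \eqref{eq:bus} follow because $z(t)\in\Dom(A)\subset\cV_0$ vanishes at finite endpoints of $J$, exactly as in \Cref{prop:boundary-realizations}\ref{item:boundary-dirichlet}, while $\xi(x)\to\gamma_\ell$ or $\gamma_r$ as $s(x)\to\alpha$ or $\beta$; this is immediate from \eqref{eq:260824.4} in each case. Third, for the initial condition, $\|u(t,\cdot)-f\|_{L^2(\mu)}=\|z(t)-g\|_H\le\|e^{tB}g-g\|_H+\int_0^t\|e^{rB}h\|_H\,\dd r$. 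This tends to $0$ by strong continuity and the bound \eqref{eq:260823.4}.

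The main obstacle is bookkeeping rather than analysis. One must check that $h\in H$, which is precisely the hypothesis $q\xi\in L^2(I,\mu)$; this is needed because $\xi$ itself is generally not in $L^2(I,\mu)$. One must also check that the pointwise identity $\xi''=-(b/a)\xi'$ interacts correctly with the a.e. chain rule. In the scale variable $\xi\circ s^{-1}$ is affine, so its second $y$-derivative vanishes identically, and this is the cleanest way to verify that no stray terms appear.
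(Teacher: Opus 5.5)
Your proposal is correct and follows essentially the same route as the paper. You subtract $\xi$, work in $\cV_0$, set $z(t)=e^{tB}g-\int_0^te^{rB}h\,\dd r$ with $g=U(f-\xi)$ and $h=U(q\xi)$, and then use \Cref{lem:semigroup-domain}, \Cref{lem:domain}, $a\xi''+b\xi'=0$ a.e., the vanishing of $\cV_0$ functions at finite scale endpoints, $\partial_t^{k+1}z=B\partial_t^kz$ for $k\ge1$, and the bound $\|z(t)-g\|_H\le\|e^{tB}g-g\|_H+te^{t\|q^-\|_\infty}\|h\|_H$, exactly as the paper does.
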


\begin{proof}
For $\cV=\cV_0$, let $A$ and $B$ be as in
\Cref{prop:L2-semigroup}. Set
$
  g=U[f-\xi]\in H
$
and
$
  h=U[q\xi]\in H
$,
and define, for $t>0$,
\[
  z(t)
  =
  e^{tB}g-\int_0^t e^{rB}h\,\dd r,
  \qquad
  u(t,x)=\xi(x)+z(t,s(x)).
\]
By \Cref{lem:semigroup-domain},
$
  z\in C^\infty((0,\infty);\Dom(B))
$
and
$
  z'=Bz-h
$.
Since $\Dom(B)=\Dom(A)$ and the graph norms of $A$ and $B$ are
equivalent, the argument in the proof of \Cref{thm:main}, with $z$ in
place of $v$, gives \ref{item:main-a} and
$
  \partial_xu(t,\cdot)\in\AC_{\loc}(I)
$
for every $t>0$.

Moreover, for every $t>0$ and almost every $x\in I$,
\[
  \partial_t z(t,s(x))
  =
  a(x)\partial_{xx}z(t,s(x))
  +b(x)\partial_x z(t,s(x))
  -q(x)z(t,s(x))-q(x)\xi(x).
\]
Since
$
  a\xi''+b\xi'=0
$
almost everywhere on $I$, it follows that
\eqref{eq:a.e.-PDE} holds for $u$. For every $k\in\N$,
differentiating $z'=Bz-h$ in time gives
$
  \partial_t^{k+1}z=B\partial_t^kz
$.
The argument in the proof of \Cref{thm:main}\ref{item:main-b} therefore
gives the remaining assertions in \ref{item:main-b}. If $a$, $b$, and
$q$ are continuous on $I$, then $\xi\in C^2(I)$, and the argument in
the proof of \Cref{thm:main}\ref{item:main-c} gives
\ref{item:main-c}.

Since $z(t)\in H$,
$
  u(t,\cdot)-\xi\in L^2(I,\mu)
$
for every $t>0$. Moreover,
$
  z(t,\cdot)\in\Dom(A)\subset\cV_0
$,
so
$
  z(t,y)\to0
$
as $y\downarrow s(\ell+)$ whenever $s(\ell+)>-\infty$, and
$
  z(t,y)\to0
$
as $y\uparrow s(r-)$ whenever $s(r-)<\infty$.
Together with the boundary values of $\xi$, this gives \eqref{eq:bus}.
Finally,
\[
  \|z(t)-g\|_H
  \le
  \|e^{tB}g-g\|_H
  +
  t e^{t\|q^-\|_\infty}\|h\|_H
  \longrightarrow0
\]
as $t\downarrow0$. Since
$
  U[u(t,\cdot)-\xi]=z(t)
$
and
$
  U[f-\xi]=g
$,
the unitarity of $U$ yields
$
  u(t,\cdot)-f\to0
$
in $L^2(I,\mu)$ as $t\downarrow0$.
\end{proof}

\section{Feynman--Kac representations}
\label{sec:FK}

In this section, we identify the semigroups constructed in
\Cref{sec:main} with their Feynman--Kac representations.  In
\Cref{sec:FK-dirichlet}, we treat killed diffusion and nonzero,
time-independent Dirichlet data.  In \Cref{sec:FK-reflected}, we treat the
reflected diffusion associated with the maximal form domain.
 The regularity and
boundary properties of these probabilistic representations then follow
from the results of \Cref{sec:main}.

\subsection{Killed diffusion and Dirichlet data}
\label{sec:FK-dirichlet}

Throughout this subsection, take $\cV=\cV_0$, let
$q:I\to\mathbb R$ be bounded and Borel, and let $A$, $B$, and
$(T_t^q)_{t\ge0}$ be the corresponding objects from \Cref{sec:main}.  By
\cite[Theorem~3.2]{Fukushima2010}, $(\cE,\cV_0)$ is a regular,
strongly local Dirichlet form on $H$, and by
\cite[Theorem~3.3]{Fukushima2010} its associated diffusion is the natural-scale diffusion $Y$ killed on
leaving $J$.  In particular,
$
  e^{tA}
$
is its $L^2$ transition semigroup.

Write $\E_x$ for expectation with respect to the law of $X$ started
from $x\in I$. For $t>0$ and $f\in L^2(I,\mu)$, choose a Borel
representative $\widehat f$ of $f$ and set
\begin{equation}\label{eq:FK-killed}
  S_t^qf(x)
  =
  \E_x\!\left[
    \exp\!\left(-\int_0^t q(X_r)\,\dd r\right)
    \widehat f(X_t);
    \ t<\zeta
  \right],
  \qquad x\in I.
\end{equation}
The following proposition shows that the definition is independent of
the chosen representative and identifies $S_t^q$ with $T_t^q$.

\begin{proposition}[Feynman--Kac identification]
\label{prop:FK-L2}
Assume \eqref{eq:ES}, and let $q:I\to\mathbb R$ be bounded and Borel.
For every $t>0$ and every $f\in L^2(I,\mu)$, the expectation in
\eqref{eq:FK-killed} is absolutely integrable for every $x\in I$ and
is independent of the chosen Borel representative $\widehat f$.
Moreover, $S_t^qf$ is the unique continuous representative of
$T_t^qf$.
\end{proposition}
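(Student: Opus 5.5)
The plan is to first handle $q=0$ and then add the potential by a Duhamel identity. Throughout, use that $Y=s(X)$ killed at $\zeta$ is the diffusion associated with $(\cE,\cV_0)$, so $e^{tA}$ is its $L^2$ transition semigroup by \cite[Theorem~3.3]{Fukushima2010}.

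\textbf{Step 1 (absolute continuity of the transition kernel).} For $q=0$, first show that for each $t>0$ and $x\in I$ the sub-probability measure $P_x(X_t\in\cdot,\,t<\zeta)$ has a density $p_t(x,\cdot)$ with respect to $\mu$, and that $p_t(x,\cdot)\in L^2(I,\mu)$. Do this on the Hilbert side.
\begin{itemize}
\item For $g\in H$ and $t>0$, \Cref{lem:semigroup-domain} and \Cref{lem:domain} show that $e^{tA}g$ has a continuous version.
\item The evaluation map $g\mapsto (e^{tA}g)(y)$ is a bounded functional on $H$, since \Cref{lem:domain} gives $|e^{tA}g(y)|\le C_K\|e^{tA}g\|_{\Dom(A)}\le C_{K,t}\|g\|_H$.
\item By Riesz, there is $k_t(y,\cdot)\in H$ with $e^{tA}g(y)=\langle k_t(y,\cdot),g\rangle_H$ for every $y$.
\end{itemize}

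\textbf{Step 2 (identify the probabilistic semigroup).} Next identify $\E_y[g(Y_t);t<\zeta]$ with this continuous version for every $y$, not just almost every $y$. The Dirichlet form theory only gives equality quasi-everywhere, but in one dimension points have positive capacity, because $w\in L^1_{\loc}$ and $\cV$-functions are continuous. So quasi-everywhere means everywhere, and the transition function of the process associated with a regular Dirichlet form is defined pointwise.

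I expect this step to be the main obstacle: one must make sure the identification holds for every starting point. If quoting capacity is not enough, the alternative is the following.
\begin{itemize}
\item Prove that $y\mapsto \E_y[g(Y_t);t<\zeta]$ is continuous for bounded continuous compactly supported $g$. This follows from Feller continuity of the Engelbert--Schmidt diffusion, by continuity of hitting times in one dimension and the strong Markov property.
\item Two continuous functions that agree $\mu$-a.e. agree everywhere.
\item Extend from such $g$ to bounded Borel $g\in H$ by a monotone class argument, using that both sides define finite measures in $g$ for fixed $y$.
\end{itemize}
This yields $P_y(Y_t\in\dd z,t<\zeta)=k_t(y,z)w(z)\,\dd z$. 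Consequently, for $f\in L^2$ and a Borel representative $\widehat f$,
\[
\E_x\bigl[|\widehat f(X_t)|;t<\zeta\bigr]=\langle k_t(s(x),\cdot),|U\widehat f|\rangle_H<\infty .
\]
This expression depends only on the $\mu$-class of $f$, and $S_t^0 f$ is the continuous representative of $T_t^0 f$.

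\textbf{Step 3 (bounded $q$).} Write $S_t^q f$ by expanding
\[
e^{-\int_0^t q}=1-\int_0^t q(X_r)e^{-\int_r^t q}\,\dd r
\]
and applying the Markov property. This gives the Duhamel identity
\[
S_t^q f=S_t^0 f-\int_0^t S_r^0\bigl(qS_{t-r}^q f\bigr)\,\dd r .
\]
Absolute integrability follows from the bound $e^{t\|q^-\|_\infty}\,\E_x[|\widehat f(X_t)|;t<\zeta]$ and Step 2, and representative-independence follows in the same way.

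On the operator side, $e^{tB}$ satisfies the same Duhamel identity, since $B=A-Q$ with $Q$ bounded. Both families are exponentially bounded on $L^2$: for $S^q$, apply Jensen together with $\mu$-symmetry (sub-Markov contraction) of $S^0$. The Volterra equation therefore has a unique solution in $C([0,T];L^2)$, for instance via the Dyson series. Hence $S_t^qf=T_t^qf$ $\mu$-a.e.

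\textbf{Step 4 (continuity).} Finally, $S_t^q f$ is continuous in $x$. Each Dyson term $S^0_{r_1}qS^0_{r_2}\cdots$ is a finite composition ending with $S^0_{r}$, $r>0$, applied to an $L^2$ function, hence continuous by Steps 1--2. Local uniform convergence of the series follows from the bound $|S_r^0g(x)|\le C_{K,r}\|g\|$ for $r$ away from $0$; near $r=0$, split using the semigroup property. So $S_t^qf$ is the unique continuous representative of $T_t^qf$ given by \Cref{thm:main}.
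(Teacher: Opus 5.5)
Your proposal is correct, but it takes a genuinely different route from the paper.

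\textbf{What the paper does.} It takes the symmetric, jointly continuous transition density $p_t$ from It\^o--McKean and gets integrability from $\int_I p_t(x,z)^2\,\mu(\dd z)=p_{2t}(x,x)$. It then handles the potential in one step at the level of Dirichlet forms. It shifts to $R=q\circ s^{-1}+c\ge0$ and identifies the Revuz measure $Rw\,\dd y$ of $\int_0^{t\wedge\zeta}R(Y_r)\,\dd r$. The killing theorem \cite[Theorem~6.1.1]{FukushimaOshimaTakeda2010} then shows that the killed process has form $(\cE^\nu,\cV_0)$ with operator $A-R$. Finally, quasi-continuity (which is ordinary continuity in one dimension) is applied to this perturbed form.

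\textbf{What you do instead.} You obtain the kernel $k_t(y,\cdot)\in H$ by Riesz from the paper's own \Cref{lem:domain} and \Cref{lem:semigroup-domain}. So you need neither It\^o--McKean nor joint continuity of a density. You use Dirichlet-form theory (or your Feller argument) only for $q=0$. You then add the potential by elementary perturbation theory: the pointwise Duhamel identity from the Markov property, its operator counterpart $e^{tB}=e^{tA}-\int_0^te^{rA}Qe^{(t-r)B}\,\dd r$, and Gronwall uniqueness for the Volterra equation.

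\textbf{Trade-off.} Your route avoids Revuz measures and the killing theorem entirely, at the price of some bookkeeping.
To read the pointwise Duhamel identity as an $L^2(I,\mu)$-valued equation, note that $(r,x)\mapsto S_r^qf(x)$ is jointly measurable with $|S_r^qf|\le e^{r\|q^-\|_\infty}S_r^0|f|$. Hence $r\mapsto S_r^qf$ is Bochner measurable and bounded.

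\textbf{A cleaner Step~4.} In Step~4 the constant $C_{K,r}$ blows up as $r\downarrow0$. The clean way to split is the pointwise domination $|S^0_{r_1}qS^0_{r_2}q\cdots qS^0_{r_{n+1}}f|\le\|q\|_\infty^nS_t^0|f|$ for $r_1+\dots+r_{n+1}=t$, whose right-hand side is continuous. This gives continuity of each Dyson term by dominated convergence, and locally uniform convergence of the series. Equivalently, $S_t^qf$ is the locally uniform limit as $\varepsilon\downarrow0$ of the continuous functions $S^0_\varepsilon(S^q_{t-\varepsilon}f)$, since the difference is at most $(e^{\varepsilon\|q\|_\infty}-1)e^{t\|q^-\|_\infty}S_t^0|f|$.
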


\begin{proof}
Fix $t>0$ and $f\in L^2(I,\mu)$, and let $\widehat f$ be a Borel
representative of $f$. Let $p_t(x,z)$ denote the transition density of
the killed diffusion with respect to $\mu$. It can be chosen symmetric
and jointly continuous on $(0,\infty)\times I\times I$; see
\cite[Chapter~IV, Section~4.11]{ItoMcKean1974}. By symmetry and the
Chapman--Kolmogorov identity,
\[
  \int_I p_t(x,z)^2\,\mu(\dd z)
  =
  \int_I p_t(x,z)p_t(z,x)\,\mu(\dd z)
  =
  p_{2t}(x,x)
  <\infty
\]
for every $x\in I$. Hence
\begin{align}
  \E_x\!\left[
    \exp\!\left(-\int_0^t q(X_r)\,\dd r\right)
    |\widehat f(X_t)|;
    \ t<\zeta
  \right]
  &\le
  e^{t\|q^-\|_\infty}
  \int_I p_t(x,z)|\widehat f(z)|\,\mu(\dd z) \nonumber\\
  &\le
  e^{t\|q^-\|_\infty}
  p_{2t}(x,x)^{1/2}
  \|f\|_{L^2(I,\mu)}.
  \label{eq:260824.2}
\end{align}
This proves absolute integrability. If $\widetilde f$ is another
Borel representative of $f$, applying \eqref{eq:260824.2} with
$\widehat f-\widetilde f$ in place of $\widehat f$ shows that $S_t^qf$ does not depend on this choice.

Write now $\E_y^Y$ for expectation with respect to the law of $Y$ started
from $y\in J$. Set
$
  c=\|q^-\|_\infty
$
and
$
  R=q\circ s^{-1}+c\ge0
$.
The positive continuous additive functional
\[
  C_t^R
  =
  \int_0^{t\wedge\zeta}R(Y_r)\,\dd r
\]
has Revuz measure
$
  \nu(\dd y)=R(y)w(y)\,\dd y
$.
Indeed, for nonnegative bounded Borel functions $\varphi$ and $\psi$
with compact support, symmetry of $(e^{tA})_{t\ge0}$ gives
\[
  \int_J \psi(y)\E_y^Y\!\left[
    \int_0^t \varphi(Y_r)\,\dd C_r^R
  \right]w(y)\,\dd y
  =
  \int_0^t\int_J
    e^{rA}\psi(y)\varphi(y)R(y)w(y)\,\dd y\,\dd r.
\]
After extending this identity to arbitrary nonnegative Borel
$\varphi$ and $\psi$ by monotone convergence, the Revuz
characterization
\cite[Theorem~5.1.3 (i)$\Leftrightarrow$(iii)]
{FukushimaOshimaTakeda2010}
yields the claim.

Since $R$ is bounded, \cite[Theorem~6.1.1]{FukushimaOshimaTakeda2010} shows that the process obtained
from $Y$ by additional killing with $C^R$ has Dirichlet form
$(\cE^\nu,\cV_0)$, where
\[
  \cE^\nu(v,z)
  =
  \cE(v,z)+\int_J vz\,\dd\nu.
\]
Let $R$ also denote the multiplication operator by $R$ on $H$. The
operator associated with $(\cE^\nu,\cV_0)$ is $A-R$ with domain
$\Dom(A)$. Indeed, for $v\in\Dom(A)$ and $z\in\cV_0$,
\[
  \cE^\nu(v,z)
  =
  -\langle Av,z\rangle_H+\langle Rv,z\rangle_H
  =
  -\langle(A-R)v,z\rangle_H.
\]
Conversely, if $v$ belongs to the operator domain associated with
$\cE^\nu$, then for some $g\in H$,
$
  \cE(v,z)=-\langle g+Rv,z\rangle_H
$
for every $z\in\cV_0$, hence $v\in\Dom(A)$.

It remains to identify the semigroup.  By linearity, it suffices to
consider nonnegative $f$. Choose $\widehat f\ge0$ and set
$
  h=\widehat f\circ s^{-1}
$,
so that $[h]=Uf$. By
\cite[Theorem~4.2.3(i)]{FukushimaOshimaTakeda2010}, the function
\begin{equation}\label{eq:260824.1}
  y\longmapsto
  \E_y^Y\!\left[
    \exp\!\left(-\int_0^tR(Y_r)\,\dd r\right)
    h(Y_t);
    \ t<\zeta
  \right]
\end{equation}
is an $(\cE^\nu,\cV_0)$-quasi-continuous representative of
$e^{t(A-R)}[h]$. By
\cite[Example~2.1.2, p.~77]{FukushimaOshimaTakeda2010},
quasi-continuity here is indeed ordinary continuity. Hence the function in
\eqref{eq:260824.1} is the continuous representative of
$e^{t(A-R)}[h]$.

Since $R=q\circ s^{-1}+c$ and $B=(A-R)+c$, multiplying the function in \eqref{eq:260824.1} by $e^{ct}$ and using
$Y=s(X)$ on $[0,\zeta)$ shows that $S_t^q f\circ s^{-1}$ is the
continuous representative of $e^{tB}Uf$. By \eqref{eq:L2-conjugation}, $S_t^qf$ is therefore a continuous
representative of $T_t^qf$.
Its uniqueness follows from \Cref{thm:main}.
\end{proof}

By \Cref{prop:FK-L2} and \Cref{thm:main}, the function
$
  u(t,x)=S_t^qf(x)
$
satisfies all the conclusions of \Cref{thm:main} for $\cV=\cV_0$.
Moreover, \Cref{prop:boundary-realizations}\ref{item:boundary-dirichlet}
gives
$
  \lim_{x\downarrow\ell}u(t,x)=0
$
if $s(\ell+)>-\infty$, and
$
  \lim_{x\uparrow r}u(t,x)=0
$
if $s(r-)<\infty$, for every $t>0$.

For the nonzero, time-independent Dirichlet data of
\Cref{prop:nonzero-dirichlet}, retain $\gamma_\ell$, $\gamma_r$, and
the function $\xi$ from \eqref{eq:260824.4}.  The function $\xi$ is
bounded and has limits at both endpoints of $I$.  On
$\{\zeta<\infty\}$, write
$
  X_{\zeta-}
  =
  \lim_{r\uparrow\zeta}X_r
$
and interpret $\xi(X_{\zeta-})$ through these endpoint limits.

\begin{corollary}[Feynman--Kac representation with nonzero, time-independent Dirichlet data]
\label{cor:FK-nonzero-dirichlet}
Under the assumptions of \Cref{prop:nonzero-dirichlet}, for $t>0$ and
$x\in I$, define
\begin{align}\label{eq:FK-nonzero-dirichlet}
  F(t,x)
  &=
  \E_x\!\left[
    \exp\!\left(-\int_0^t q(X_r)\,\dd r\right)
    f(X_t);
    \ t<\zeta
  \right] \nonumber\\
  &\quad+
  \E_x\!\left[
    \exp\!\left(-\int_0^\zeta q(X_r)\,\dd r\right)
    \xi(X_{\zeta-});
    \ \zeta\le t
  \right].
\end{align}
Both expectations are absolutely integrable, and, if $u$ denotes the
solution constructed in \Cref{prop:nonzero-dirichlet}, then
$
  F(t,x)=u(t,x)$ for all $t >0$ and $x\in I$.
\end{corollary}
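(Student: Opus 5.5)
Write $u(t,x)=\xi(x)+z(t,s(x))$ with $z(t)=e^{tB}g-\int_0^te^{rB}h\,\dd r$, $g=U[f-\xi]$ and $h=U[q\xi]$, as in the proof of \Cref{prop:nonzero-dirichlet}. I will identify each of the three pieces probabilistically and then combine them.

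First, \Cref{prop:FK-L2} applied to $f-\xi\in L^2(I,\mu)$ shows that $S_t^q[f-\xi]$ is absolutely integrable and is the continuous representative of $U^{-1}e^{tB}g$. Second, for the integral term, \Cref{prop:FK-L2} gives $S_r^q[q\xi]$ as the continuous representative of $U^{-1}e^{rB}h$. Fubini and the bound \eqref{eq:260824.2} (note $p_{2r}(x,x)$ blows up as $r\downarrow0$, but $q\xi$ is bounded, so the simpler bound $e^{r\|q^-\|_\infty}\|q\xi\|_\infty$ applies) then identify $\int_0^t S_r^q[q\xi]\,\dd r$ with the continuous representative of $U^{-1}\int_0^te^{rB}h\,\dd r$. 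Equality holds everywhere, not just a.e., because both sides are continuous in $x$: the left side by dominated convergence over $r$, the right side by \Cref{lem:domain}.

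The key step is an It\^o/Dynkin-type identity for the bounded function $\xi$. Set $E_t=\exp(-\int_0^tq(X_r)\,\dd r)$. I claim
\[
  \xi(x)=\E_x[E_t\xi(X_t);t<\zeta]+\E_x[E_\zeta\xi(X_{\zeta-});\zeta\le t]+\E_x\Bigl[\int_0^{t\wedge\zeta}E_rq(X_r)\xi(X_r)\,\dd r\Bigr].
\]
Since $\xi$ is affine in $s$, the process $\xi(X_t)$ is a bounded local martingale on $[0,\zeta)$, because $s(X)$ is a local martingale. Hence $\xi(X_{t\wedge\zeta})$ is a bounded martingale whose value at $\zeta$ is $\xi(X_{\zeta-})$; the limit exists because $\xi(X)$ is a bounded martingale converging as $r\uparrow\zeta$, and $X_{\zeta-}$ exists in $[\ell,r]$. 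Applying It\^o's product rule to $E_{r}\xi(X_{r})$ on $[0,t\wedge\zeta)$ and localizing with exit times of compact subintervals, the bounded $q$ and bounded $\xi$ allow passage to the limit by dominated convergence. This yields the identity above.

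Finally, combine: write $f(X_t)=(f-\xi)(X_t)+\xi(X_t)$ in the first term of $F$. This gives $F(t,x)=S_t^q[f-\xi](x)+\xi(x)-\int_0^tS_r^q[q\xi](x)\,\dd r$, provided the last expectation in the identity equals $\int_0^tS_r^q[q\xi](x)\,\dd r$, which is Fubini with the killed indicator $r<\zeta$. Therefore $F(t,x)=\xi(x)+z(t,s(x))=u(t,x)$. Absolute integrability of the first term in $F$ follows from that of $S^q_t[f-\xi]$ together with the boundedness of $\xi$, and the second term is bounded by $e^{t\|q^-\|_\infty}\|\xi\|_\infty$.

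I expect the main obstacle to be the boundary behaviour in the martingale identity. One must justify that $\xi(X_{\zeta-})$ agrees with the endpoint values $\gamma_\ell,\gamma_r$ on $\{\zeta\le t\}$. In particular, when $\zeta<\infty$ the process must exit at an endpoint of finite scale, since exit through an endpoint with infinite scale in finite time is impossible for a natural-scale local martingale $s(X)$ that would otherwise have to diverge. I would argue this via convergence of the bounded-below local martingale $s(X)$ near that endpoint.
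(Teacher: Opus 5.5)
Your proposal is correct and follows the paper's proof. You identify $u-\xi$ with the killed Feynman--Kac value of $f-\xi$ minus $\int_0^t S_r^q[q\xi]\,\dd r$ via \Cref{prop:FK-L2}, Fubini, and continuity, and then add a localized It\^o identity expressing $\xi(x)$ through the discounted values of $\xi$ at $t\wedge\zeta$. The one small difference is that you get the martingale property of $\xi(X_{\cdot\wedge\tau_n})$ from $s(X)$ being a local martingale, whereas the paper uses the generalized It\^o formula \cite[Theorem~3.7.1]{KaratzasShreve1991}.

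The boundary step you flag as the main obstacle is not actually needed. The paper defines $\xi(X_{\zeta-})$ through the endpoint limits of $\xi$, and these limits exist in all four cases of \eqref{eq:260824.4}, so only the existence of $\lim_{r\uparrow\zeta}X_r$ on $\{\zeta<\infty\}$ is used.
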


\begin{proof}
Since
$
  f=(f-\xi)+\xi
$,
\eqref{eq:260824.2}, applied to $f-\xi$, together with the boundedness
of $\xi$, shows that the first expectation in
\eqref{eq:FK-nonzero-dirichlet} is absolutely integrable. The second
expectation is absolutely integrable because, on $\{\zeta\le t\}$, its
integrand has absolute value at most
$
  e^{t\|q^-\|_\infty}\|\xi\|_\infty
$.

By the proof of \Cref{prop:nonzero-dirichlet} and the unitarity of $U$,
\[
  [u(t,\cdot)-\xi]
  =
  T_t^q[f-\xi]
  -
  \int_0^t T_r^q[q\xi]\,\dd r.
\]
By \Cref{prop:FK-L2}, dominated convergence, \eqref{eq:FK-L2-bound},
and Fubini's theorem, the continuous representative of the right-hand
side is
\begin{align*}
  &\E_x\!\left[
    \exp\!\left(-\int_0^t q(X_v)\,\dd v\right)
    (f-\xi)(X_t);
    \ t<\zeta
  \right] \\
  &\quad-
  \int_0^t
  \E_x\!\left[
    \exp\!\left(-\int_0^r q(X_v)\,\dd v\right)
    q(X_r)\xi(X_r);
    \ r<\zeta
  \right]\dd r.
\end{align*}
Since $u(t,\cdot)-\xi$ is continuous and $\mu$ has full support, this
equals $u(t,x)-\xi(x)$ for every $x\in I$.

Let $(\tau_n)_{n\in\N}$ be exit times from an increasing sequence of
compact subintervals of $I$ with $\tau_n\uparrow\zeta$. Since
$\xi\in C^1(I)$, $\xi'\in\AC_{\loc}(I)$, and
$
  a\xi''+b\xi'=0
$
almost everywhere, the generalized It\^o formula
\cite[Theorem~3.7.1]{KaratzasShreve1991}, applied to
$\xi(X_{\cdot\wedge\tau_n})$, and the product rule give, after letting
$n\uparrow\infty$,
\begin{align*}
  \xi(x)
  &=
  \E_x\!\left[
    \exp\!\left(-\int_0^t q(X_v)\,\dd v\right)
    \xi(X_t);
    \ t<\zeta
  \right] \\
  &\quad+
  \E_x\!\left[
    \exp\!\left(-\int_0^\zeta q(X_v)\,\dd v\right)
    \xi(X_{\zeta-});
    \ \zeta\le t
  \right] \\
  &\quad+
  \int_0^t
  \E_x\!\left[
    \exp\!\left(-\int_0^r q(X_v)\,\dd v\right)
    q(X_r)\xi(X_r);
    \ r<\zeta
  \right]\dd r,
\end{align*}
where dominated convergence and Fubini's theorem were used. Adding
the last two displays gives
$
  F(t,x)=u(t,x)
$.
\end{proof}

\begin{remark}[Classical verification]
The preceding results identify the Feynman--Kac values with the
solutions in \Cref{thm:main} and \Cref{prop:nonzero-dirichlet}, but do
not imply uniqueness among all classical solutions.  Let $v$ be a
classical solution with the same initial and boundary data.  For the
homogeneous Dirichlet problem, apply It\^o's formula to
$
  r\mapsto
  \exp\!\left(-\int_0^r q(X_w)\,\dd w\right)v(t-r,X_r)
$
up to the exit times from compact subintervals of $I$.  If the resulting
terminal variables are uniformly integrable, passage to the limit gives
$
  v(t,x)=S_t^qf(x)
$;
compare \cite[Theorem~2.5]{JansonTysk2006}.  With nonzero, time-independent 
Dirichlet data, the same argument gives the exit payoff in
\eqref{eq:FK-nonzero-dirichlet} and identifies $v$ with $F$.

If $I$ is bounded and $v$ extends continuously to
$[0,T]\times\overline I$, the localized terminal variables are bounded,
so uniform integrability is automatic. On an unbounded interval,
additional integrability may be needed to pass to the localization
limit. Such a condition cannot in general be omitted: strict local
martingales provide the standard obstruction and may lead to
nonuniqueness of classical solutions; see
\cite{CoxHobson2005,EkstromTysk2009,BayraktarXing2010,Ruf2013}.
For uniqueness of weak solutions to related Cauchy problems under
Engelbert--Schmidt conditions, see also \cite{CetinLarsen2023}.
\end{remark}

\subsection{Maximal form domain and reflected diffusion}
\label{sec:FK-reflected}

Throughout this subsection, take $\cV=\cV_{\max}$, let
$q:I\to\mathbb R$ be bounded and Borel, and let $A$, $B$, and
$(T_t^q)_{t\ge0}$ be the corresponding objects from \Cref{sec:main}.

Call the left endpoint regular if
\[
  s(\ell+)>-\infty
  \qquad\text{and}\qquad
  \mu((\ell,c))<\infty
\]
for some $c\in I$, and the right endpoint regular if
\[
  s(r-)<\infty
  \qquad\text{and}\qquad
  \mu((c,r))<\infty
\]
for some $c\in I$.  Let $I^{\mathrm r}$ be obtained from $I$ by
adjoining exactly the regular endpoints, and let $J^{\mathrm r}$ be
the corresponding extension of $J$.  The scale
function extends to a homeomorphism from $I^{\mathrm r}$ onto
$J^{\mathrm r}$, again denoted by $s$.  Extend $w(y)\,\dd y$ by zero mass at the added endpoints, and denote the resulting
measure by $m^{\mathrm r}$.  It has full support on
$J^{\mathrm r}$, and
$
  L^2(J^{\mathrm r},m^{\mathrm r})
$
is naturally identified with $H$.

By \cite[Theorem~5.2 and the discussion following (5.6)]
{Fukushima2010} and
\cite[Theorem~2.2(i), (iii), and Section~2.3]
{Fukushima2014}, $(\cE,\cV_{\max})$, regarded as a form on
$J^{\mathrm r}$, is a regular, strongly local, irreducible Dirichlet
form.  Let $Y^{\mathrm r}$ be its associated diffusion.  Killing
$Y^{\mathrm r}$ on leaving $J$ gives $Y$; at the added endpoints,
$Y^{\mathrm r}$ is instantaneously reflecting: it is not killed and spends zero Lebesgue time there.
Pulling $Y^{\mathrm r}$ back through the scale function gives a
diffusion $X^{\mathrm r}$ on $I^{\mathrm r}$, whose killing on leaving
$I$ gives $X$.  We call $X^{\mathrm r}$ the reflected diffusion and
write $\zeta^{\mathrm r}$ for its lifetime, which may still be
finite if $X^{\mathrm r}$ reaches a non-adjoined endpoint in finite
time.  Write $\E_x^{\mathrm r}$ for expectation under its law when
started from $x\in I^{\mathrm r}$.

For $t>0$ and $f\in L^2(I,\mu)$, choose a Borel representative
$\widehat f$ of $f$. Extend $q$ and $\widehat f$ to a bounded Borel function
$q^{\mathrm r}$ on $I^{\mathrm r}$ and to a Borel
function $\widehat f^{\mathrm r}$ on $I^{\mathrm r}$, respectively, by assigning
arbitrary finite values at the added endpoints. Set
\begin{equation}\label{eq:FK-reflected}
  S_t^{q,\mathrm r}f(x)
  =
  \E_x^{\mathrm r}\!\left[
    \exp\!\left(
      -\int_0^t q^{\mathrm r}(X_v^{\mathrm r})\,\dd v
    \right)
    \widehat f^{\mathrm r}(X_t^{\mathrm r});
    \ t<\zeta^{\mathrm r}
  \right],
  \qquad x\in I^{\mathrm r}.
\end{equation}
The next proposition shows that neither extension nor the chosen Borel
representative affects this definition.

\begin{proposition}[Feynman--Kac representation for the reflected diffusion]
\label{prop:FK-max}
Assume \eqref{eq:ES}, and let $q:I\to\mathbb R$ be bounded and Borel.
For every $t>0$ and every $f\in L^2(I,\mu)$, the expectation in
\eqref{eq:FK-reflected} is absolutely integrable for every
$x\in I^{\mathrm r}$ and is independent of the choices of
$\widehat f$, $q^{\mathrm r}$, and $\widehat f^{\mathrm r}$.
Moreover,
$
  S_t^{q,\mathrm r}f\in C(I^{\mathrm r})
$,
and its restriction to $I$ is the unique continuous representative of $T_t^qf$.
\end{proposition}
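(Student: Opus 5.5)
I will follow the proof of \Cref{prop:FK-L2}, working on $J^{\mathrm r}$ in natural scale with the reflected diffusion $Y^{\mathrm r}=s(X^{\mathrm r})$ and the regular, strongly local Dirichlet form $(\cE,\cV_{\max})$ on $L^2(J^{\mathrm r},m^{\mathrm r})$.

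\textbf{Absolute integrability and independence of choices.} The transition density $p^{\mathrm r}_t(x,z)$ of $X^{\mathrm r}$ with respect to $\mu$ (extended by zero mass at the added endpoints) is symmetric and jointly continuous on $(0,\infty)\times I^{\mathrm r}\times I^{\mathrm r}$. This is the standard Itô--McKean fact for one-dimensional diffusions with reflecting regular endpoints, now including the endpoints. By Chapman--Kolmogorov, $\int p^{\mathrm r}_t(x,z)^2\mu(\dd z)=p^{\mathrm r}_{2t}(x,x)<\infty$. This yields the analogue of \eqref{eq:260824.2} for every $x\in I^{\mathrm r}$.

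Because $X^{\mathrm r}_t$ has a density with respect to a measure giving zero mass to the added endpoints, the law of $X^{\mathrm r}_t$ does not charge them. Hence the value of $\widehat f^{\mathrm r}$ there is irrelevant. The same bound applied to $\widehat f-\widetilde f$ gives independence of the Borel representative.

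For $q^{\mathrm r}$, I use that $Y^{\mathrm r}$ spends zero Lebesgue time at the added endpoints. Thus $\int_0^t q^{\mathrm r}(X^{\mathrm r}_v)\,\dd v$ is almost surely unchanged when $q^{\mathrm r}$ is modified there. The occupation statement follows from $\E^{\mathrm r}_x\int_0^t\one_{\{e\}}(X^{\mathrm r}_v)\,\dd v=\int_0^t\int\one_{\{e\}}p^{\mathrm r}_v\,\dd\mu\,\dd v=0$.

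\textbf{Identification with $e^{tB}$.} Set $c=\|q^-\|_\infty$ and $R=q^{\mathrm r}\circ s^{-1}+c\ge0$ on $J^{\mathrm r}$. The additive functional $\int_0^{t\wedge\zeta^{\mathrm r}}R(Y^{\mathrm r}_v)\,\dd v$ has Revuz measure $R\,m^{\mathrm r}$, by the same symmetry computation as before with $e^{tA}$ now the $\cV_{\max}$ semigroup. Since $m^{\mathrm r}$ puts no mass on the endpoints, this measure equals $R w\,\dd y$. By \cite[Theorem~6.1.1]{FukushimaOshimaTakeda2010}, the perturbed form is $\cE+\int vz\,R\,w\,\dd y$ on $\cV_{\max}$, and its operator is $A-R$ with domain $\Dom(A)$, exactly as in the killed case.

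\cite[Theorem~4.2.3(i)]{FukushimaOshimaTakeda2010} then shows that the Feynman--Kac expectation is a quasi-continuous representative of $e^{t(A-R)}Uf$. In one dimension on $J^{\mathrm r}$, points have positive capacity, so quasi-continuity is ordinary continuity on $J^{\mathrm r}$. Multiplying by $e^{ct}$ gives $S^{q,\mathrm r}_tf\in C(I^{\mathrm r})$, and its restriction to $I$ represents $T_t^qf$. Uniqueness of the continuous representative comes from \Cref{thm:main}.

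\textbf{Main obstacle.} The delicate step is the claim that quasi-continuity is genuine continuity up to and including the added endpoints, together with the fact that \cite[Theorem~4.2.3]{FukushimaOshimaTakeda2010} gives a version defined for every starting point rather than only quasi-everywhere. I would justify both using absolute continuity of the transition function, i.e.\ the existence of $p^{\mathrm r}_t$, which makes "q.e." statements hold everywhere. The capacity-of-points argument from \cite[Example~2.1.2]{FukushimaOshimaTakeda2010} applies because $\cV_{\max}\subset C(J^{\mathrm r})$ with local sup bounds as in \eqref{eq:260823.1}, now extended to compact neighbourhoods of the regular endpoints using $\int w<\infty$ there.
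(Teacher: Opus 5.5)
Your proposal is correct and follows the same route as the paper. The Chapman--Kolmogorov bound $\int p_t^{\mathrm r}(x,\cdot)^2\,\dd\mu=p_{2t}^{\mathrm r}(x,x)$ gives integrability and independence of $\widehat f$ and $\widehat f^{\mathrm r}$, zero occupation time at the added endpoints gives independence of $q^{\mathrm r}$, and the Revuz-measure, killing and quasi-continuity argument of \Cref{prop:FK-L2} is rerun for $(\cE,\cV_{\max})$ on $J^{\mathrm r}$; your extra details (deriving the zero occupation time from the density, and extending the point-capacity bound to the regular endpoints to obtain continuity on all of $I^{\mathrm r}$) fill in what the paper leaves to ``the same arguments''.
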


\begin{proof}
Since $X^{\mathrm r}$ spends zero Lebesgue time at the added endpoints, the time integral in \eqref{eq:FK-reflected} is independent of the values assigned to $q^{\mathrm r}$ there.
By \cite[Chapter~IV, Section~4.11]{ItoMcKean1974}, $Y^{\mathrm r}$ has a symmetric, jointly continuous transition density $p_t^{\mathrm r}(y,z)$ with respect to $m^{\mathrm r}$. The transition-density estimate used in the proof of \Cref{prop:FK-L2}, with $p_t$ and $\mu$ replaced by $p_t^{\mathrm r}$ and $m^{\mathrm r}$, respectively, proves absolute integrability in \eqref{eq:FK-reflected}. It also proves independence of $\widehat f$ and $\widehat f^{\mathrm r}$, since any two resulting functions on $J^{\mathrm r}$ agree $m^{\mathrm r}$-almost everywhere. 

The continuity assertion and identification with $T_t^qf$ follow by the same arguments as in the proof of \Cref{prop:FK-L2}, with the Dirichlet realization replaced by the maximal realization.
\end{proof}

By \Cref{prop:FK-max} and \Cref{thm:main}, for every
$f\in L^2(I,\mu)$ the function
$
  u(t,x)=S_t^{q,\mathrm r}f(x)
$,
$t>0$ and $x\in I$, satisfies all the conclusions of
\Cref{thm:main} for $\cV=\cV_{\max}$. 
Proposition~\ref{prop:boundary-realizations}\ref{item:boundary-neumann}
gives, at every regular endpoint $e\in\{\ell,r\}$,
\[
  \lim_{x\to e}
  \frac{\partial_xu(t,x)}{\rho(x)}
  =
  0,
  \qquad t>0,
\]
where the limit is taken within $I$. This is the homogeneous Neumann
condition in the scale coordinate. If $\rho$ is bounded near $e$, then
also
$
  \lim_{x\to e}\partial_xu(t,x)=0.
$

\section{Sharpness and a two-dimensional counterexample}
\label{sec:sharpness}
The three cases of \Cref{ex:jump-coefficients} lie in the
Engelbert--Schmidt framework and show that discontinuity of any one of
$a$, $b$, or $q$ can destroy twice differentiability.  In that example,
$A$ and $B$ denote the operators associated with the Dirichlet form
domain $\cV_0$.  By \Cref{prop:FK-L2}, the corresponding semigroup
values are the killed Feynman--Kac values.  \Cref{ex:260824} shows that
the one-dimensional regularity conclusion does not extend to two
dimensions under continuity and uniform ellipticity alone.

\begin{example}[Discontinuous coefficients]\label{ex:jump-coefficients}
Consider the following three choices of $I$, $a$, $b$, $q$, and $F$.
\begin{enumerate}[label=\textup{(\roman*)},ref=\textup{(\roman*)}]
\item\label{item:jump-a}
Let
\[
  I=\left(-\frac\pi2,\frac\pi{\sqrt2}\right),
  \qquad b=q=0,
  \qquad
  a(x)=
  \begin{cases}
    1,&x<0,\\
    2,&x\ge0,
  \end{cases}
\]
and define
\[
  F(x)=
  \begin{cases}
    \cos x,&x\le0,\\
    \cos(x/\sqrt2),&x\ge0.
  \end{cases}
\]

\item\label{item:jump-b}
Let
\[
  I=\left(-\frac{3\pi}{4},\frac{2\pi}{3\sqrt3}\right),
  \qquad
  a=1,\qquad q=0,\qquad
  b(x)=\one_{[0,\infty)}(x),
\]
and define
\[
  F(x)=
  \begin{cases}
    \cos x-\sin x,&x\le0,\\[1mm]
    e^{-x/2}\left(
      \cos\frac{\sqrt3x}{2}
      -\frac1{\sqrt3}\sin\frac{\sqrt3x}{2}
    \right),&x\ge0.
  \end{cases}
\]

\item\label{item:jump-q}
Let
\[
  I=\left(-\frac{3\pi}{4},1\right),
  \qquad a=1,\qquad b=0,\qquad
  q(x)=\one_{[0,\infty)}(x),
\]
and define
\[
  F(x)=
  \begin{cases}
    \cos x-\sin x,&x\le0,\\
    1-x,&x\ge0.
  \end{cases}
\]
\end{enumerate}

In every case, the Engelbert--Schmidt conditions hold with
$\sigma=\sqrt{2a}$.  Moreover, $F \in C^1(I)$, $F$
vanishes at both endpoints, $F'\in\AC(I)$, and
$
  aF''+bF'-qF=-F
$
almost everywhere.  On the other hand, $F''(0-)=-1$ in every case,
whereas $F''(0+)=-1/2$ in \ref{item:jump-a} and $F''(0+)=0$ in
\ref{item:jump-b} and \ref{item:jump-q}.  Thus $F$ is not twice
differentiable at $0$.

In each case, set $G=UF=F\circ s^{-1}$.  Since $I$ is bounded, $a$, $b$, and $1/a$ are bounded, $\rho$ is bounded above
and away from zero.  Since $F$ and $F'$ are bounded and $F$ vanishes
at the endpoints, a change of variables gives
$
  G\in\cV_0
$.
Moreover, $G'=
  ({F'}/{\rho})\circ s^{-1}
  \in \AC(J)$, and the chain rule, together with
$\rho'=-(b/a)\rho$ and \eqref{eq:w}, gives
\[
  G''(s(x))
  =
  \frac{F''(x)\rho(x)-F'(x)\rho'(x)}{\rho^3(x)}
  =
  \frac{a(x)F''(x)+b(x)F'(x)}{a(x)\rho^2(x)}
  =
  w(s(x))(q(x)-1)F(x)
\]
almost everywhere.  Since $q$ is bounded, $(Q-I)G\in H$ and integration by parts in
\eqref{eq:generator-form} yields
$
  G\in\Dom(A)
$
and
$
  AG=QG-G
$.
Hence $G\in\Dom(B)$ and $BG=-G$.  Consequently,
\[
  T_t^qF
  =
  U^{-1}e^{tB}G
  =
  e^{-t}F,
  \qquad t\ge0.
\]
Thus $T_t^qF$ is not twice differentiable at $0$ for any $t>0$.
Cases \ref{item:jump-a}, \ref{item:jump-b}, and \ref{item:jump-q}
show, respectively, that continuity of $a$, $b$, and $q$ cannot be
omitted from \Cref{thm:main}\ref{item:main-c}, even when the other two
coefficients are continuous.
\end{example}

\begin{example}[Failure of $C^{1,2}$
 regularity in two dimensions]
\label{ex:260824}
The one-dimensional conclusion of \Cref{thm:main} does not extend to
two dimensions under mere continuity of the coefficient matrix.  We use
the construction in the proof of
\cite[Theorem~4]{EscauriazaMontaner2017}.  For $r>0$ set 
\[
  L(r)=4-\log r,
  \qquad
  \alpha(r)
  =
  \frac{4L(r)-1}{L(r)^2-3L(r)+1},
\]
and
define
\[
  M(x)
  =
  I_2+\one_{x \neq 0} \frac{\alpha(|x| \wedge 1)}{|x|^2}
  \begin{pmatrix}
    x_1^2 & x_1x_2\\
    x_1x_2 & x_2^2
  \end{pmatrix}, 
\]
with $I_2$ denoting the $2\times 2$ identity matrix.
The matrix $M$ is continuous and uniformly elliptic, but is not
H\"older continuous at the origin (this is already seen along the
$x_1$-axis).

Define
\[
  \Phi(x)
  =
 \one_{x \neq 0}  x_1x_2
  \left(4-\log|x|\right)^2.
\]
For $0<|x|<1$, a direct calculation in polar coordinates gives
\[
\operatorname{tr}\!\left(M(x)D^2\Phi(x)\right)
=
\frac{2x_1x_2}{|x|^2}
\left(
1-4L(|x|)
+\alpha(|x|)\bigl(L(|x|)^2-3L(|x|)+1\bigr)
\right)
=0.
\]
Thus, $\operatorname{tr}(M D^2\Phi)=0$ almost everywhere on the unit disc
$B_1 =\{x\in\R^2:|x|<1\}$.
The function $\Phi$, its first
derivatives, and its almost-everywhere defined second derivatives
belong to $L^p(B_1)$ for every $p<\infty$.  Its mixed second derivative
is nevertheless unbounded in every neighbourhood of the origin; in particular,
$
  \Phi\notin C^2(B_1).
$

We next turn to the parabolic Feynman--Kac interpretation.  Since $M$ is
bounded, continuous, and uniformly elliptic, the martingale problem for
$
  f\mapsto\operatorname{tr}(M D^2f)
$
is well posed; see \cite[Theorem~7.2.1]{StroockVaradhan1979}.  Let $Z$
be the corresponding diffusion, equivalently a weak solution of
\[
  \dd Z_t=\sqrt{2M(Z_t)}\,\dd W_t,
\]
where $W$ now denotes a two-dimensional Brownian motion and $\sqrt{2M(Z_t)}$ is the symmetric positive-definite square root of $2 M(Z_t)$. 
Let
$
  \tau=\inf\{t\ge0:Z_t\notin B_1\}.
$
By the It\^o--Krylov formula \cite[Theorem~2.10.1]{Krylov1980}, with $\E_x^Z$ denoting the expectation for the process $Z$ started at $x$, 
\[
  \Phi(x)
  =
  \E_x^Z\!\left[\Phi(Z_{t\wedge\tau})\right],
  \qquad t\ge0,\quad x\in B_1.
\]
Thus the Feynman--Kac value with initial condition $\Phi$ and boundary values
 $\Phi|_{\partial B_1}$ is stationary, i.e., 
$
  u(t,x)
  =
  \Phi(x)$.
In particular, it is not twice differentiable at the origin for any
$t>0$.

Consequently, the parabolic initial-boundary value problem
\[
  \partial_tu
  =
  \operatorname{tr}(M(x)D^2u)
  \quad\text{on }(0,T]\times B_1,
  \qquad
  u(0,\cdot)=\Phi,
  \qquad
  u=\Phi\quad\text{on }(0,T]\times\partial B_1,
\]
does not admit a solution in
$
  C([0,T]\times\overline B_1)\cap C^{1,2}((0,T]\times B_1).
$
Indeed, if such a solution existed, applying It\^o's formula with localization
inside $B_1$ would identify it with the Feynman--Kac value above and
hence with $\Phi$, contradicting
$
  \Phi\notin C^2(B_1).
$
Thus, already for a time-homogeneous two-dimensional equation, continuity and uniform ellipticity of
the coefficient matrix do not suffice for  classical
regularity.
\end{example}

\bibliographystyle{amsplain}
\bibliography{SDE_1D}

\end{document}